\newcommand{\field}[1]{\mathbb{#1}}
\newcommand{\N}{\field{N}}
\newcommand{\Z}{\field{Z}}
\newcommand{\R}{\field{R}}
\newcommand{\C}{\field{C}}
\DeclareMathOperator{\re}{\mathrm{Re}}
\DeclareMathOperator{\sign}{sign}
\theoremstyle{remark}
\newtheorem{theorem}{Theorem}[section]
\newtheorem{lemma}[theorem]{Lemma}
\newtheorem{corollary}[theorem]{Corollary}
\newtheorem{definition}[theorem]{Definition}
\newtheorem{example}[theorem]{Example}
\newtheorem{remark}[theorem]{Remark}
\begin{document}

\begin{frontmatter}
  \setcounter{page}{1}

\title{Fourier Decay of Absolutely and H\"older\\Continuous Functions with Infinitely or\\Finitely Many Oscillations}
\author[label1,label2]{Juhani Nissil\"a}
\affiliation[label1]{organization={Intelligent Machines and Systems, Faculty of Technology, University of Oulu},
            addressline={P.O. Box 4200}, 
            postcode={FI-90014}, 
            city={Oulu},
            country={FINLAND}}
\affiliation[label2]{organization={Applied and Computational Mathematics, Faculty of ITEE, University of Oulu},
            addressline={P.O. Box 4500}, 
            postcode={FI-90014}, 
            city={Oulu},
            country={FINLAND}}

\begin{abstract}
The main result of this paper is, that if we suppose that a function is absolutely continuous and uniformly H\"older continuous and that its finite difference function does not oscillate infinitely many times on a bounded interval, then the decay rate of its Fourier coefficients can be estimated exactly. This rate of decay predicts the same uniform H\"older continuity but the two other conditions are not necessary. Examples from the literature and by the author show that none of the assumptions can be relaxed without weakening the decay for some functions. The uniform H\"older continuity of chirps and the decay of their Fourier coefficients are studied. The main result is then applied in the estimation of the error of numerical Weyl fractional derivatives calculated using the discrete Fourier transform. The main result is also extended to Fourier transforms.
\end{abstract}


\begin{keyword} Fourier analysis \sep H\"older continuity \sep absolute continuity \sep chirps \sep Weyl fractional derivative
\MSC 42A10 \sep 42A16 \sep 42A20 \sep 42A38 \sep 26A16 \sep 26A30 \sep 26A33 \sep 26A46 \sep 33C60
\end{keyword}

\end{frontmatter}

 \section{Introduction}\label{sec:1}

It is a well-known result that the decay of Fourier coefficients or the Fourier transform is related to the smoothness of the function. Intuitively, this is caused by the fact that we decompose the function as a sum or integral of infinitely differentiable cosine and sine functions. Thus, the more irregular the function is, the more its Fourier representation requires high frequency components to represent any abrupt changes in it.

A bijective result to predict the same continuity from the Fourier decay alone exists in the case of $L^2$ H\"older continuity and the decay of the function's $L^2$ approximation. The connection between uniform H\"older continuity and Fourier coefficients is a much more subtle problem. After a review of known classical results, it is shown that a bijective result is not possible since absolute continuity and infinite oscillations affect the decay rate but are not necessary conditions. Specifically, I will prove the following Theorem.

 \begin{theorem}\label{Th1}
For some $m \in \N_0$ and $\mu \in (0,1]$, suppose that $f^{(m)}$ is absolutely continuous, $T$-periodic, $f \in C_{m,\,\mu}[0,T]$, and the number of local maxima and minima of $\Delta_h f^{(m)}$ is uniformly bounded for every $0 < h \leq h_0$. Then the Fourier coefficients of $f$ decay like $c_k(f) = O(1/|k|^{1+m+\mu})$ as $|k| \rightarrow \infty$.
 \end{theorem}

The decay rate $O(1/|k|^{1+m+\mu})$ of Fourier coefficients also implies that $f \in C_{m,\,\mu}[0,T]$. This result is probably known, though it is not explicitly written in the textbooks. I will add a proof of this result in Section \ref{sec:4} using difference calculus and the fact that the falling factorial is asymptotically equal to a polynomial with the same exponent (which is also proved in Section \ref{sec:2}). This decay rate also implies the absolute continuity of $f^{(m)}$ in the case $\mu \in (0.5,1)$ but not in the case $\mu \in (0,0.5)$. This part of the problem was studied by Littlewood, Wiener and Wintner as well as Schaeffer in the 1930s and these results are shortly reviewed in Section \ref{sec:3}. Theorem \ref{Th1} is proved in Section \ref{sec:4}.

To prove that the finite oscillations condition is necessary for Theorem \ref{Th1}, we calculate the uniform H\"older continuity of chirps and estimate their Fourier decay in Section \ref{sec:5}. Then we find out that for some chirps their Fourier coefficients decay slower than $O(1/|k|^{1+m+\mu})$ although they are absolutely continuous and in $C_{m,\,\mu}[0,T]$ with $\mu \in (0,1]$. The integral of the Weierstrass function (discussed in Example \ref{ex:Weierstrass}) serves as another example, as it is in $C_{1,\,\mu}[0,T]$ while its Fourier coefficients decay only like $O(1/|k|^{1+\mu})$

One application of the main result is shown when estimating the $L^\infty$ error of numerical Weyl fractional derivatives calculated using the discrete Fourier transform (DFT) in Section \ref{sec:6}. Finally, the main result is extended to Fourier transforms of functions in the Sobolev space $W_{m+1}^1(\R)$ in Section \ref{sec:7} as follows:

 \begin{theorem}\label{Th2}
Suppose that $f \in W_{m+1}^1(\R) \cap C_{m,\,\mu}(\R)$ for some $m \in \N_0$ and $\mu \in (0,1]$, and the number of local maxima and minima of $\Delta_h f^{(m)}$ is uniformly bounded for all $0 < h \leq h_0$. Then the Fourier transform of $f$ decays like $\widehat{f}(\nu) = O(1/|\nu|^{1+m+\mu})$  as $|\nu| \rightarrow \infty$.
 \end{theorem}


\section{Definitions}\label{sec:2}

We will mostly be interested in signals of length $T$ defined on the interval $[0,T]$ or on $[-T/2,T/2]$. The usual Lebesgue spaces of complex-valued functions on the interval are denoted by $L^p(0,T)$ and on the real line by $L^p(\R)$, with $1 \leq p \leq \infty$. The corresponding discrete spaces of infinitely long complex-valued vectors defined over the integers are denoted by $l^p(\Z)$.

 \begin{definition}\label{Def:boundedvariation}
Function $f: [0,T] \rightarrow \C$ is of \textit{bounded variation}, i.e. $f \in BV[0,T]$, if its \textit{total variation} is finite, i.e.
\begin{equation}
V_0^T(f) = \sup_\mathcal{P} \sum_{k=1}^N |f(t_k) - f(t_{k-1})| < \infty,
\end{equation}
where $t_k$, $k = 0,1,\ldots,N$ is a partition of the interval $[0,T]$ and the supremum is taken over all possible partitions $\mathcal{P}$ (of any number $N$ of points) of $[0,T]$.
 \end{definition}

 \begin{definition}\label{Def:absolutecontinuity}
Function $f:$ $[0,T] \rightarrow \C$ is \textit{absolutely continuous}, i.e. $f \in AC[0,T]$, if its derivative $f'$ exists a.e. on $[0,T]$ and is Lebesgue integrable
\begin{equation}
\int_0^t f'(\tau) \, \mathrm{d}\tau = f(t) - f(0), \hspace{1cm} \text{for every } t \in [0,T].
\end{equation}
If $f^{(m)} \in AC[0,T]$ for $m \in \N_0$, then $f$ belongs to \textit{Sobolev space} $W_{m+1}^1[0,T]$.
 \end{definition}
 

 \begin{lemma} \label{lemma:BVsum}
If $f,g \in BV[0,T]$ and $\alpha,\beta \in \C$, then $\alpha f + \beta g \in BV[0,T]$ and
\begin{equation}
V_0^T(\alpha f + \beta g) \leq |\alpha|V_0^T(f) + |\beta|V_0^T(g).
\end{equation}
If $0 \leq a < b < c \leq T$, then $V_a^c(f) = V_a^b(f) + V_b^c(f).$
 \end{lemma}
 
 \begin{proof} 
For real-valued functions: \cite[pp. 328--330]{Kolmogorov}.
 \end{proof} 
 
 \begin{lemma} \label{lemma:ACsum}
If $f,g \in AC[0,T]$ and $\alpha,\beta \in \C$, then $\alpha f + \beta g \in AC[0,T]$.
 \end{lemma}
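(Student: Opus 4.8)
The plan is to work directly from the integral characterisation of absolute continuity in Definition \ref{Def:absolutecontinuity}, since linearity of differentiation and of the Lebesgue integral make the argument almost immediate. Writing $h = \alpha f + \beta g$, the goal is to verify the three requirements of the definition for $h$: that $h'$ exists almost everywhere, that it is Lebesgue integrable, and that it recovers $h$ through the fundamental-theorem-of-calculus identity on $[0,T]$.

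First I would dispose of the almost-everywhere differentiability. Since $f,g \in AC[0,T]$, the derivatives $f'$ and $g'$ exist off two exceptional sets $E_f, E_g \subset [0,T]$, each of Lebesgue measure zero. On the complement of $E_f \cup E_g$, which still has full measure because the union of two null sets is null, the difference quotient of $h$ is exactly the corresponding linear combination of the difference quotients of $f$ and $g$; passing to the limit shows that $h$ is differentiable there with $h' = \alpha f' + \beta g'$ almost everywhere.

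Next I would combine integrability with the integral identity. As a linear combination of the Lebesgue-integrable functions $f'$ and $g'$, the function $h' = \alpha f' + \beta g'$ is itself Lebesgue integrable, so $\alpha,\beta \in \C$ cause no difficulty. Then, for every $t \in [0,T]$, linearity of the integral yields
\begin{align*}
\int_0^t h'(\tau)\,\mathrm{d}\tau
&= \alpha \int_0^t f'(\tau)\,\mathrm{d}\tau + \beta \int_0^t g'(\tau)\,\mathrm{d}\tau \\
&= \alpha\bigl(f(t)-f(0)\bigr) + \beta\bigl(g(t)-g(0)\bigr) = h(t) - h(0),
\end{align*}
which is precisely the condition demanded by Definition \ref{Def:absolutecontinuity}. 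Hence $\alpha f + \beta g \in AC[0,T]$.

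I do not expect any genuine obstacle here: every step is a direct consequence of linearity, and the only point meriting a word of care is the need to intersect the two full-measure sets on which $f'$ and $g'$ exist before asserting $h' = \alpha f' + \beta g'$ pointwise. An alternative route would invoke the classical $\varepsilon$-$\delta$ formulation of absolute continuity and bound the finite sums for $h$ by the triangle inequality, but the integral characterisation adopted in Definition \ref{Def:absolutecontinuity} makes the linearity argument the shorter and cleaner one.
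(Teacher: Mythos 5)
Your proof is correct, but it takes a different route from the paper, which offers no argument of its own and simply cites Kolmogorov--Fomin \cite[p. 337]{Kolmogorov} ``for real-valued functions.'' The classical proof in that reference works from the $\varepsilon$--$\delta$ formulation of absolute continuity (bounding $\sum_k |h(b_k)-h(a_k)|$ over finite collections of disjoint intervals by the triangle inequality), and since it is stated only for real-valued functions, the extension to complex scalars $\alpha,\beta$ and complex-valued $f,g$ is left implicit, presumably via splitting into real and imaginary parts. You instead argue directly from the integral characterisation that the paper adopts as Definition \ref{Def:absolutecontinuity}, verifying its three requirements for $h=\alpha f+\beta g$: almost-everywhere differentiability off the union of the two null sets, integrability of $h'=\alpha f'+\beta g'$, and the fundamental-theorem identity by linearity of the Lebesgue integral. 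This buys you two things: the argument is self-contained within the paper's own framework rather than an appeal to an external text, and it handles complex scalars and complex-valued functions with no extra work, closing the gap the bare citation leaves open. What the citation route buys in exchange is independence from the paper's nonstandard definition --- the $\varepsilon$--$\delta$ proof does not presuppose the (nontrivial) equivalence between the two formulations of $AC$, whereas your proof is only as strong as the definition it starts from; within this paper that is exactly the right starting point, and your care in intersecting the two full-measure sets before writing $h'=\alpha f'+\beta g'$ is the one step that genuinely needed saying.
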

 
 \begin{proof} 
For real-valued functions: \cite[p. 337]{Kolmogorov}.
 \end{proof} 
 
 \begin{lemma} \label{lemma:BV_AC_difference}
Let $f: [0,T] \rightarrow \R$. If $f \in BV[0,T]$, then it can be represented as the difference of two increasing functions. If $f \in AC[0,T]$, then the two functions are also absolutely continuous. In both cases this representation is
\begin{equation}
f(x) = V_0^x(f) - \big( V_0^x(f) - f(x) \big)
\end{equation}
 \end{lemma}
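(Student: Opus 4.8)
The plan is to regard the displayed identity as purely formal and to concentrate the work on the two genuine assertions hidden in it: that each summand is monotone, and that in the $AC$ case each inherits absolute continuity from $f$. Since $V_0^x(f) - \big(V_0^x(f) - f(x)\big) = f(x)$ identically, there is nothing to prove about the decomposition itself. Writing $p(x) = V_0^x(f)$ and $n(x) = V_0^x(f) - f(x)$, so that $f = p - n$, the whole statement reduces to showing that $p$ and $n$ are increasing, and absolutely continuous whenever $f$ is.

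The monotonicity of $p$ is immediate from the additivity of the total variation in Lemma \ref{lemma:BVsum}: for $0 \le x < y \le T$ one has $p(y) - p(x) = V_0^y(f) - V_0^x(f) = V_x^y(f) \ge 0$. For $n$ I would use the same additivity together with the elementary lower bound $V_x^y(f) \ge |f(y) - f(x)|$, which holds because the two-point partition $\{x,y\}$ of $[x,y]$ already contributes $|f(y)-f(x)|$ to the supremum defining $V_x^y(f)$. Then
\begin{equation}
n(y) - n(x) = V_x^y(f) - \big(f(y) - f(x)\big) \ge V_x^y(f) - |f(y)-f(x)| \ge 0,
\end{equation}
so $n$ is increasing as well, which settles the $BV$ half of the lemma.

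For the $AC$ half the crucial step is to show that $p(x) = V_0^x(f)$ is itself absolutely continuous; once this is known, $n = p - f$ is absolutely continuous by Lemma \ref{lemma:ACsum}. I would establish this through the representation $V_0^x(f) = \int_0^x |f'(\tau)|\,\mathrm{d}\tau$. The inequality $V_0^x(f) \le \int_0^x |f'|$ is the easy direction: for any partition, $\sum_k |f(t_k)-f(t_{k-1})| = \sum_k \big|\int_{t_{k-1}}^{t_k} f'\big| \le \sum_k \int_{t_{k-1}}^{t_k} |f'| = \int_0^x |f'|$, using Definition \ref{Def:absolutecontinuity} on each subinterval, and taking the supremum gives the bound. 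Since $f' \in L^1(0,T)$, the indefinite integral $x \mapsto \int_0^x |f'|$ again satisfies Definition \ref{Def:absolutecontinuity} (its derivative is $|f'|$ a.e. by the Lebesgue differentiation theorem, and it recovers itself by integration from $0$), so once equality is in hand $p \in AC[0,T]$ follows and with it $n \in AC[0,T]$.

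The main obstacle is therefore the reverse inequality $\int_0^x |f'| \le V_0^x(f)$. I expect to handle it by approximation: given $\varepsilon > 0$, choose a step function $g = \sum_j c_j \mathbf{1}_{(s_{j-1},s_j)}$ with $\int_0^x |f' - g| < \varepsilon$, which is possible by density of step functions in $L^1$. Using the partition $\{s_j\}$ and comparing the integrals termwise via the triangle inequality, one gets
\begin{equation}
V_0^x(f) \ge \sum_j |f(s_j) - f(s_{j-1})| = \sum_j \Big|\int_{s_{j-1}}^{s_j} f'\Big| \ge \int_0^x |g| - \varepsilon \ge \int_0^x |f'| - 2\varepsilon,
\end{equation}
where the middle step uses $\big|\int_{s_{j-1}}^{s_j} g\big| = \int_{s_{j-1}}^{s_j}|g|$ (as $g$ is constant on each piece) together with $\int_0^x|f'-g| < \varepsilon$. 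Letting $\varepsilon \to 0$ yields $\int_0^x|f'| \le V_0^x(f)$, hence equality. The only points needing care are the $L^1$-density of step functions and the termwise comparison, both of which the approximation controls automatically.
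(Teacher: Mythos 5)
Your proof is correct, and it is worth noting that the paper does not actually prove this lemma in-text: it cites Kolmogorov--Fomin, where the $BV$ half is the same Jordan-decomposition argument you give (monotonicity of $V_0^x(f)$ by additivity, monotonicity of $V_0^x(f)-f(x)$ via the two-point partition bound $V_x^y(f)\geq |f(y)-f(x)|$), but the $AC$ half is handled differently. Kolmogorov--Fomin work from the classical $\varepsilon$--$\delta$ definition of absolute continuity (small total increments over finite systems of disjoint intervals) and show directly that the variation function $V_0^x(f)$ inherits that modulus, by refining partitions inside each small interval; no integral representation of the variation is needed. You instead prove the identity $V_0^x(f)=\int_0^x|f'(\tau)|\,\mathrm{d}\tau$ and verify that the indefinite integral satisfies the paper's Definition \ref{Def:absolutecontinuity}, with the reverse inequality obtained by $L^1$-approximation of $f'$ by step functions. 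Your route is arguably the better fit here, because the paper defines $AC[0,T]$ via the fundamental-theorem-of-calculus property rather than via $\varepsilon$--$\delta$, so the cited argument would strictly speaking first require the equivalence of the two definitions, whereas yours works with the paper's definition as stated; the price is that you import the Lebesgue differentiation theorem (to see that $x\mapsto\int_0^x|f'|$ again satisfies Definition \ref{Def:absolutecontinuity}) and the density of step functions in $L^1$, both standard. As a bonus, your argument delivers the stronger and frequently used fact $V_0^x(f)=\int_0^x|f'|$ for $f\in AC[0,T]$, which the classical proof does not produce along the way; all intermediate steps you give (termwise triangle-inequality comparison, $\bigl|\int_{s_{j-1}}^{s_j} g\bigr|=\int_{s_{j-1}}^{s_j}|g|$ for $g$ constant on each piece, and $n=p-f\in AC$ via Lemma \ref{lemma:ACsum}) check out.
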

 
 \begin{proof} 
For real-valued functions: \cite[pp. 331 and 337--338]{Kolmogorov}.
 \end{proof} 
 
 \begin{lemma}[Lebesgue decomposition] \label{lemma:Lebesgue_decomposition}
Any real-valued function $f \in BV[0,T]$ can be represented as the sum $f = \phi + \varphi + \chi$, where $\phi \in AC[0,T]$, $\varphi \in BV[0,T]$ and $\varphi$ is also continuous and its derivative is zero a.e. Finally, $\chi$ is a jump function, i.e. it is piecewise constant with at most countably infinite number of steps. It follows that $f' = \phi$ a.e.
 \end{lemma}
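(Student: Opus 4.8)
The plan is to establish the decomposition first for monotone functions and then extend it by linearity. By Lemma \ref{lemma:BV_AC_difference} every real-valued $f \in BV[0,T]$ is a difference of two increasing functions, and each of the three target classes --- absolutely continuous functions, continuous functions whose derivative vanishes a.e., and jump functions --- is closed under taking differences (for $AC$ this is Lemma \ref{lemma:ACsum}, and the continuous singular and jump classes are clearly stable under subtraction, with the resulting pieces remaining $BV$ by Lemma \ref{lemma:BVsum}). Hence it suffices to produce the splitting $f = \phi + \varphi + \chi$ under the extra assumption that $f$ is increasing on $[0,T]$.

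So I would assume $f$ increasing. The analytic starting point is Lebesgue's differentiation theorem for monotone functions: $f'$ exists a.e., is nonnegative and Lebesgue integrable, and obeys $\int_a^b f'(t)\,\mathrm{d}t \le f(b)-f(a)$ for all $0 \le a < b \le T$. Define
\begin{equation}
\phi(x) = \int_0^x f'(t)\,\mathrm{d}t .
\end{equation}
Then $\phi \in AC[0,T]$ by Definition \ref{Def:absolutecontinuity}, it is increasing, and $\phi' = f'$ a.e. Next I would set $s = f - \phi$. The inequality above shows that $s$ is again increasing, while $s' = f' - \phi' = 0$ a.e., so $s$ carries exactly the singular behaviour of $f$.

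Because $\phi$ is continuous, $s$ has the same discontinuities as $f$; being monotone and bounded, $s$ has at most countably many jumps whose sizes sum to at most $s(T) - s(0)$. Collecting these saltus into a jump function
\begin{equation}
\chi(x) = \sum_{t \le x} \big( s(t^{+}) - s(t^{-}) \big),
\end{equation}
with the usual one-sided conventions at $x$ and at the endpoints of $[0,T]$, one verifies that $\chi$ is increasing and piecewise constant with at most countably many steps, and that $\varphi = s - \chi$ is continuous and increasing. Since a jump function differentiates to zero almost everywhere, $\chi' = 0$ a.e., and together with $s' = 0$ a.e.\ this gives $\varphi' = 0$ a.e.\ as well; both $\varphi$ and $\chi$ are monotone and hence of bounded variation. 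This produces $f = \phi + \varphi + \chi$ with all the claimed properties, and the final assertion $f' = \phi'$ a.e.\ follows at once from $\varphi' = \chi' = 0$ a.e.

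The main obstacle lies entirely in the analytic input rather than in the bookkeeping: namely, that a monotone function is differentiable almost everywhere with an integrable derivative satisfying $\int_a^b f' \le f(b)-f(a)$ (Lebesgue's theorem, usually proved through a Vitali covering argument), together with the fact that a pure jump function has derivative zero almost everywhere. Once these two facts are granted, writing $f = \phi + (s-\chi) + \chi$ and checking the regularity of each piece is routine, and the passage from the monotone case back to a general $BV$ function via Lemma \ref{lemma:BV_AC_difference} completes the argument.
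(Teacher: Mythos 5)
Your proposal is correct: the paper gives no argument of its own here, deferring entirely to Kolmogorov--Fomin (p.~341), and your proof is essentially the classical construction found there --- reduce to the increasing case via the Jordan decomposition, use Lebesgue's differentiation theorem for monotone functions to split off $\phi(x)=\int_0^x f'$, and separate the singular remainder into its saltus part $\chi$ and continuous part $\varphi$ (Kolmogorov--Fomin peel off the jump part first and the absolutely continuous part second, but this ordering is immaterial). You also correctly identify the two genuine analytic inputs (a.e.\ differentiability of monotone functions with $\int_a^b f'\le f(b)-f(a)$, and the a.e.\ vanishing derivative of a jump function), and your closing observation that the lemma's final claim should read $f'=\phi'$ a.e.\ quietly fixes a typo in the statement.
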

 
 \begin{proof} 
\cite[p. 341]{Kolmogorov}.
 \end{proof} 
 
 \begin{definition}\label{Def:holdercontinuity}
Function $f:$ $[0, T] \rightarrow \C$ is \textit{uniformly H\"older continuous} of order $\mu = (0,1]$, if
\begin{equation}\label{eq:holdercontinuity}
|f(t + h) - f(t)| \leq C h^\mu,
\end{equation}
holds for all $t$, $t+h \in [0,T]$ and $0 < h \leq h_0$, where $h_0$ is some sufficiently small number. Then we write $f \in C_\mu[0,T]$. If for some $m = 1, 2, 3, \ldots$ it holds that $f^{(m)} \in C_\mu[0,T]$, we write $f \in C_{m,\,\mu}[0,T]$.
 \end{definition}
 
The case $\mu = 1$ is often called a \textit{Lipschitz condition}. Definitions for pointwise H\"older exponents are discussed for example at the beginning of \cite{HolderExp}. 

 \begin{definition}\label{Def:Lpholdercontinuity}
Let $f \in L^p(0,T)$ and $T$-periodic, where $p \geq 1$. Function $f$ is $L^p$ \textit{H\"older continuous} of order $\mu = (0,1]$, if for all $0 < h \leq h_0$
\begin{equation}
\Vert f_h - f \Vert_p = \left( \frac{1}{T} \int_0^T |f(t+h) - f(t)|^p \mathrm{d}t \right)^{1/p} \leq C h^\mu,
\end{equation}
and we write $f \in C_\mu^p[0,T]$. If $f^{(m)} \in C_\mu^p[0,T]$, we write $f \in C_{m,\,\mu}^p[0,T]$. The case $p = \infty$ would give us just the uniform H\"older continuity.
 \end{definition}
 
The big $O$ and small $o$ notations $O(f)$ and $o(f)$ are used with functions $f: \R \rightarrow \C$ and $O(f_n)$ and $o(f_n)$ with sequences $\{f_n\}_{n=-\infty}^\infty$. Functions $f$ and $g$ (or sequences $f_n$ and $g_n$) are \textit{asymptotically equal} iff
\begin{equation}
\lim_{t\rightarrow \infty}\frac{f(t)}{g(t)} = 1,
\end{equation}
which we notate $f \sim g$ and it implies that $f = O(g)$ and $g = O(f)$. The next Lemma is a classic example of an asymptotic equivalence.

 \begin{lemma}
The Gamma function
\begin{equation}\label{eq:Gamma}
\Gamma(s) = \int_0^\infty x^{s-1}e^{-x} \, \mathrm{d}x, \hspace{0.5cm} \re(s) >0, 
\end{equation}
satisfies the Stirling's formula for $t \in \R_+$
\begin{equation}\label{eq:Stirling}
\Gamma(t+1) \sim \sqrt{2 \pi t} \left( \frac{t}{e} \right)^t, \hspace{0.5cm} \text{as } t \rightarrow \infty.
\end{equation}
 \end{lemma}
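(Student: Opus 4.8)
The plan is to establish \eqref{eq:Stirling} by Laplace's method applied to the integral \eqref{eq:Gamma}, exploiting the fact that for large $t$ the integrand concentrates sharply about a single interior maximum. Writing
\begin{equation}
\Gamma(t+1) = \int_0^\infty x^t e^{-x}\,\mathrm{d}x = \int_0^\infty e^{t\ln x - x}\,\mathrm{d}x,
\end{equation}
I would first locate the maximum of the exponent $\phi(x) = t\ln x - x$. Since $\phi'(x) = t/x - 1$ vanishes precisely at $x = t$ and $\phi''(x) = -t/x^2 < 0$, the integrand peaks at $x = t$, where $\phi(t) = t\ln t - t$; this peak value already produces the factor $(t/e)^t$.

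To exhibit the concentration explicitly I would substitute $x = t + s\sqrt{t}$, so that $\mathrm{d}x = \sqrt{t}\,\mathrm{d}s$ and the lower limit $x = 0$ corresponds to $s = -\sqrt{t}$. Factoring out the peak value gives
\begin{equation}\label{eq:stirlingplan}
\Gamma(t+1) = \left(\frac{t}{e}\right)^t \sqrt{t} \int_{-\sqrt{t}}^\infty \left(1 + \frac{s}{\sqrt{t}}\right)^t e^{-s\sqrt{t}}\,\mathrm{d}s.
\end{equation}
It then suffices to prove that the remaining integral converges to $\sqrt{2\pi}$ as $t \to \infty$, since this at once yields $\Gamma(t+1) \sim \sqrt{2\pi t}\,(t/e)^t$, which is \eqref{eq:Stirling}.

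For the pointwise limit of the integrand I would write $\left(1 + s/\sqrt{t}\right)^t e^{-s\sqrt{t}} = \exp\!\big(t[\ln(1 + s/\sqrt{t}) - s/\sqrt{t}]\big)$ and use $\ln(1+y) = y - y^2/2 + O(y^3)$ with $y = s/\sqrt{t}$. The linear terms cancel, the quadratic term contributes $-s^2/2$, and the remainder is $O(s^3/\sqrt{t}) \to 0$ for each fixed $s$, so the integrand tends to $e^{-s^2/2}$. Recognising the Gaussian integral $\int_{-\infty}^\infty e^{-s^2/2}\,\mathrm{d}s = \sqrt{2\pi}$ then supplies the constant.

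The main obstacle is to justify interchanging the limit and the integral in \eqref{eq:stirlingplan}, which is delicate because the domain of integration itself depends on $t$ and the integrand depends on $t$ non-monotonically. I would resolve this by dominated convergence, extending the integrand by zero for $s < -\sqrt{t}$ and producing a single integrable majorant valid for all $t \ge 1$. The elementary inequalities $\ln(1+u) - u \le -u^2/2$ for $-1 < u \le 0$ and $\ln(1+u) - u \le -u^2/\big(2(1+u)\big)$ for $u \ge 0$ (both obtained by comparing derivatives) give, with $u = s/\sqrt{t}$, the bound $e^{-s^2/2}$ on the left of the peak and the bound $e^{-s^2/(2(1+s))}$ on the right whenever $t \ge 1$. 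Both are integrable over their respective half-lines — the right-hand majorant decays like $e^{-s/2}$ as $s \to \infty$ — so dominated convergence applies and delivers the limit $\sqrt{2\pi}$, completing the proof.
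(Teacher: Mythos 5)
Your proof is correct. Note first that the paper itself does not prove this lemma at all: it simply cites the literature (Frenzen's elementary proof in \emph{Mathematics Magazine} and Lebedev's \emph{Special Functions}, pp.~12--13), so any complete argument you give is necessarily a different route. What you supply is the classical Laplace's-method proof, and you handle the one genuinely delicate point --- the interchange of limit and integral in a $t$-dependent domain --- properly. Your two auxiliary inequalities check out: with $g(u) = \ln(1+u) - u + u^2/2$ one has $g'(u) = u^2/(1+u) \ge 0$, so $g(u) \le g(0) = 0$ on $(-1,0]$, giving the left-hand majorant $e^{-s^2/2}$ exactly (for every $t$, since $t\big[\ln(1+s/\sqrt{t}) - s/\sqrt{t}\big] \le -s^2/2$ there); and with $h(u) = \ln(1+u) - u + u^2/\big(2(1+u)\big)$ one computes $h'(u) = -u^2/\big(2(1+u)^2\big) \le 0$, so $h(u) \le 0$ on $[0,\infty)$, and since $1 + s/\sqrt{t} \le 1+s$ for $t \ge 1$ the right-hand majorant $e^{-s^2/(2(1+s))}$, which decays like $e^{-s/2}$, is valid uniformly in $t \ge 1$ as you claim. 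Dominated convergence then yields the Gaussian integral $\sqrt{2\pi}$ and hence $\Gamma(t+1) \sim \sqrt{2\pi t}\,(t/e)^t$. Compared with the references the paper leans on, your argument is self-contained and needs only elementary calculus plus dominated convergence, at the cost of being limited to the leading-order asymptotics; Lebedev's treatment, by contrast, proceeds through an integral representation for $\ln\Gamma$ and delivers the full Stirling series with error bounds, and Frenzen's proof is engineered to avoid even the dominated-convergence machinery. For the purposes of this paper, where only the asymptotic equivalence \eqref{eq:Stirling} is used (in Lemma \ref{lemma:factorial_pol_asymptotic}), your proof is entirely adequate.
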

 
 \begin{proof} 
Multiple proofs exist in the literature, see for example \cite{Stirlings} or \cite[pp. 12--13]{SpecialFunctions}. The rate of this approximation was first discovered by Abraham de Moivre and the constant was evaluated by Stirling.
 \end{proof} 
 
 \begin{example} \label{ex:asymptotic}
We will also need the following simple asymptotic equivalences, where $t \in \R_+$, $x \in \R$ and $t>x$
\begin{equation}
(t-x)^{t-x} \sim e^{-x}t^{t-x}, \hspace{0.5cm} \text{as } t \rightarrow \infty,
\end{equation}
and
\begin{equation}
\sqrt{t} \sim \sqrt{t-x}, \hspace{0.5cm} \text{as } t \rightarrow \infty.
\end{equation}
The proofs are straightforward calculations
\begin{align}
&\lim_{t\rightarrow\infty} \frac{(t-x)^{t-x}}{t^{t-x}} = \lim_{t\rightarrow\infty} \left( \frac{t-x}{t}\right)^{t-x} = \lim_{t\rightarrow\infty} \left( 1 - \frac{x}{t}\right)^{t-x} \nonumber\\
&= \lim_{t\rightarrow\infty} \exp \left( (t-x) \ln \left( 1 - \frac{x}{t}\right) \right) \nonumber\\
&= \exp \left( \lim_{t\rightarrow\infty} t \ln \left( 1 - \frac{x}{t}\right) \lim_{t\rightarrow\infty} \frac{t-x}{t} \right) \nonumber\\
&= \exp \left( \lim_{t\rightarrow\infty} \frac{\ln \left( 1 - \frac{x}{t}\right)}{1/t}  \lim_{t\rightarrow\infty} \frac{1-\frac{x}{t}}{1} \right) \nonumber\\
&=\exp \left( \lim_{t\rightarrow\infty} \frac{\frac{\mathrm{d}}{\mathrm{d}t} \ln \left( 1 - \frac{x}{t}\right)}{\frac{\mathrm{d}}{\mathrm{d}t} 1/t} \right) \nonumber\\
&= \exp \left( \lim_{t\rightarrow\infty} \frac{ \frac{x}{t^2(1-x/t)} }{-1/t^2} \right) 
= \exp \left( \lim_{t\rightarrow\infty} \frac{x}{\frac{x}{t} -1} \right) = e^{-x},
\end{align}
where we used L'H\^opital's / Johann Bernoulli's rule. This proves the first statement. The second is simpler
\begin{equation}
\lim_{t\rightarrow\infty} \frac{\sqrt{t-x}}{\sqrt{t}} = \lim_{t\rightarrow\infty} \sqrt{1 - \frac{x}{t}} 
= \sqrt{\lim_{t\rightarrow\infty}\left(1 - \frac{x}{t}\right)} = 1.
\end{equation}
 \end{example}

 \begin{definition}\label{Def:fourierseries}
Let $f \in L^1(0,T)$ and $T$-periodic. Its \textit{Fourier series} is
\begin{equation}\label{eq:FourierSeries}
\sum_{k = -\infty}^\infty c_k(f) e^{i 2 \pi k t /T }, 
\end{equation}
where the \textit{Fourier coefficients} $c_k(f)$ are calculated as
\begin{equation}\label{eq:ck}
c_k(f) = \frac{1}{T} \int_{0}^{T} f(t) e^{-i 2 \pi k t/T}\, \mathrm{d}t, \hspace{0.5cm} k = 0, \pm 1,\pm 2,\ldots.
\end{equation}
 \end{definition}
 
  \begin{definition}
Function $f$ belongs to \textit{Sobolev space} $H^s(0,T)$ of order $s > 0$ if
\begin{equation}
\Vert f \Vert_{H^s} = \left(\sum_{k = -\infty}^\infty \left( 1 + |k|^2 \right)^{s} |c_k(f)|^2 \right)^{\frac{1}{2}} < \infty.
\end{equation}
 \end{definition}

 \begin{definition}\label{maar:DFT}
The \textit{Discrete Fourier transform} (DFT) of a sequence \\$\textbf{f} = (f_0, \ldots , f_{N-1}) \in \C^N$ and its \textit{inverse transform} (IDFT) are
\begin{equation}\label{eq:DFT}
\mathcal{F} \{ \textbf{f}\, \}_k = F_k = \frac{1}{N} \sum_{n = 0}^{N-1} f_n e^{-i 2 \pi k n / N}, \hspace{0.5cm} k = 0,\ldots, N-1,
\end{equation}
\begin{equation}\label{eq:IDFT}
\mathcal{F}^{-1} \{ \textbf{F} \}_n = f_n = \sum_{k = 0}^{N-1} F_k e^{i 2 \pi k n / N}, \hspace{0.5cm} n = 0,\ldots, N-1.
\end{equation}
 \end{definition}
 
The IDFT always returns the original sequence \cite[p. 30]{TheDFT}. If we sample a finite length interval with a finer resolution, then the DFT values (\ref{eq:DFT}) approach the Fourier coefficients (\ref{eq:ck}) in the limit $N \rightarrow \infty$ \cite[p. 53]{TheDFT}. In this definition of the DFT, the negative frequencies are located at the points $N/2<k \leq N-1 $. If $N$ is even, the DFT at $k = N/2$ is a combination of the highest resolvable positive and negative frequency.

 \begin{definition}
The \textit{forward difference operator} $\Delta_h$ acts on a function $f$ with a \textit{difference interval} $h >0$
\begin{equation}
\Delta_h f(t) = f(t+h) - f(t) = f_h(t) - f(t).
\end{equation}
 \end{definition}

 \begin{lemma}[The fundamental theorem of sum calculus]\label{lemma:fundamental_theorem_of_sum_calculus}
\begin{equation}
h\sum_{n = 0}^{(N-1)h} f(a+nh) = \left. S(t) \right|_a^{a+Nh},
\end{equation}
where $S$ is such that $\Delta_h S(t) = f(t)$.
 \end{lemma}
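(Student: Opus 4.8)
The plan is to prove this exactly as one proves the fundamental theorem of calculus, but in the discrete setting: summation of $f$ along an arithmetic grid is the inverse operation to the forward difference $\Delta_h$, so an antidifference $S$ of $f$ produces a telescoping sum whose value is fixed entirely by the endpoints. The single ingredient I would invoke is the defining relation $\Delta_h S(t) = S(t+h) - S(t) = f(t)$, which rewrites each summand as a difference of two consecutive values of $S$.

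First I would evaluate this antidifference identity at the grid points $t = a+nh$, giving $f(a+nh) = S(a+(n+1)h) - S(a+nh)$ for each admissible index $n$. Substituting this into the left-hand side turns $\sum_n f(a+nh)$ into $\sum_n \bigl[S(a+(n+1)h) - S(a+nh)\bigr]$, a sum of differences of $S$ taken along the progression $a, a+h, \ldots, a+Nh$. I would then carry out the telescoping cancellation: each interior value $S(a+nh)$ occurs once with a positive and once with a negative sign and therefore drops out, leaving only $S(a+Nh) - S(a)$, which is precisely the evaluation bracket $\left.S(t)\right|_a^{a+Nh}$.

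There is no analytic obstacle here — no limiting process, no continuity or integrability hypothesis beyond $S$ being defined at the relevant grid points, and the whole argument is purely algebraic. The only point that demands care is the bookkeeping: matching the summation range and the prefactor $h$ to the chosen normalisation of $\Delta_h$ and of the antidifference $S$, so that after cancellation exactly the two endpoint terms survive and the result is neither off by one in the index nor off by a factor of $h$.
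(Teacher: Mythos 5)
Your telescoping argument is correct and is precisely the standard proof of this identity; the paper itself offers no argument at all, only the citation \cite[p.~96]{FiniteDifferences}, so your proposal supplies exactly the proof that the cited source gives. The one issue you flag as ``bookkeeping'' but leave unresolved is, however, a genuine mismatch with the statement as printed: if $\Delta_h S(t) = S(t+h)-S(t) = f(t)$ is taken literally, then your telescoping yields $\sum_{n=0}^{N-1} f(a+nh) = S(a+Nh)-S(a)$ with \emph{no} factor of $h$, so the displayed identity $h\sum_n f(a+nh) = \left. S(t)\right|_a^{a+Nh}$ is off by the prefactor $h$. It becomes correct only under the sum-calculus normalisation used in \cite{FiniteDifferences}, where the antidifference is taken against the increment $h$, i.e.\ $S$ satisfies $\Delta_h S(t) = h\,f(t)$ (the discrete analogue of $\int f\,\mathrm{d}x$ with $\mathrm{d}x = h$); equivalently, replace your $S$ by $hS$. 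You should also note that the upper summation limit $(N-1)h$ in the statement is a typo for $N-1$, which your indexing $n = 0,\ldots,N-1$ silently and correctly repairs. Neither point damages the paper downstream: in the sole application of Lemma~\ref{lemma:fundamental_theorem_of_sum_calculus}, inside the proof of Theorem~\ref{th:1part2}, the factor $h$ is a fixed constant absorbed into $C_3$, so the estimate is unaffected. But a careful write-up of your proof should state explicitly which normalisation of $S$ it is proving the identity for, since as written your argument establishes the version without the factor $h$.
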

 
 \begin{proof} 
\cite[p. 96]{FiniteDifferences}.
 \end{proof} 

 \begin{definition}
The \textit{factorial polynomial} or \textit{falling factorial} is defined as
\begin{equation}
t^{(m)_h} = t(t-h)(t-2h)\dots(t-(m-1)h),
\end{equation}
for $m \in \Z_+$ and some $h >0$ and also $t^{(-m)_h} = 1/\big((t+h)(t+2h)\ldots(t+mh)\big)$. A general definition for all $\gamma  \in \R$ is
\begin{equation}
t^{(\gamma)_h} = \frac{h^\gamma\Gamma\left(\frac{t}{h} + 1\right)}{\Gamma\left(\frac{t}{h} - \gamma +1\right)}.
\end{equation} 
 \end{definition}
 
 \begin{lemma}\label{lemma:factorial_pol_difference}
For all $\gamma \in \R$ the forward difference of a factorial polynomial is another factorial polynomial with the exponent $(\gamma -1)_h$, i.e. for any $h >0$
\begin{equation}
\Delta_h t^{(\gamma)_h} = \gamma h t^{(\gamma-1)_h}.
\end{equation}
 \end{lemma}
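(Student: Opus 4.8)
The plan is to work directly from the general Gamma-function definition of the falling factorial, namely $t^{(\gamma)_h} = h^\gamma \Gamma(t/h + 1)/\Gamma(t/h - \gamma + 1)$, since this is the only definition valid for all real $\gamma$ and hence the one that makes the claimed identity meaningful for arbitrary exponents. Writing $u = t/h$ to lighten the notation, I would first expand the forward difference as $\Delta_h t^{(\gamma)_h} = (t+h)^{(\gamma)_h} - t^{(\gamma)_h}$ and observe that replacing $t$ by $t+h$ simply advances the Gamma-arguments by one, i.e. it turns $\Gamma(u+1)/\Gamma(u-\gamma+1)$ into $\Gamma(u+2)/\Gamma(u-\gamma+2)$, so that $(t+h)^{(\gamma)_h} = h^\gamma \Gamma(u+2)/\Gamma(u-\gamma+2)$.

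The key step is then a double application of the functional equation $\Gamma(s+1) = s\Gamma(s)$ from (\ref{eq:Gamma_iterate}): in the numerator it gives $\Gamma(u+2) = (u+1)\Gamma(u+1)$ and in the denominator $\Gamma(u-\gamma+2) = (u-\gamma+1)\Gamma(u-\gamma+1)$. This lets me factor out the original $t^{(\gamma)_h}$ and write $(t+h)^{(\gamma)_h} = \frac{u+1}{u-\gamma+1}\, t^{(\gamma)_h}$, whence $\Delta_h t^{(\gamma)_h} = \bigl(\frac{u+1}{u-\gamma+1} - 1\bigr) t^{(\gamma)_h} = \frac{\gamma}{u-\gamma+1}\, t^{(\gamma)_h}$.

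It then remains to recognise the right-hand side as $\gamma h\, t^{(\gamma-1)_h}$. Here I would compute $t^{(\gamma-1)_h} = h^{\gamma-1}\Gamma(u+1)/\Gamma(u-\gamma+2)$ straight from the definition and use $\Gamma(u-\gamma+2) = (u-\gamma+1)\Gamma(u-\gamma+1)$ once more to produce exactly the factor $\frac{\gamma}{u-\gamma+1}$ together with the matching power of $h$. No genuine obstacle arises; the only point demanding care is the bookkeeping of the shifts in the Gamma arguments and the exclusion of the values at which (\ref{eq:Gamma_iterate}) fails (namely where $u+1$ or $u-\gamma+1$ is a non-positive integer), which are precisely the points where $t^{(\gamma)_h}$ itself is undefined. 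Thus the identity holds wherever both sides make sense, and the proof is a short symbolic manipulation rather than an analytic argument.
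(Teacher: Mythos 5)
Your computation is correct, and it is essentially the canonical argument: the paper gives no in-text proof of this lemma (it simply cites \cite[p.~104]{FiniteDifferences}), and the standard derivation proceeds exactly as you do, by two applications of the functional equation (\ref{eq:Gamma_iterate}) to the Gamma-function form of $t^{(\gamma)_h}$, giving $\Delta_h t^{(\gamma)_h} = \frac{\gamma}{t/h-\gamma+1}\, t^{(\gamma)_h} = \gamma h\, t^{(\gamma-1)_h}$. One minor correction to your closing remark: at points where $t/h-\gamma+1$ is a non-positive integer, $t^{(\gamma)_h}$ is not undefined but vanishes, since the reciprocal Gamma function is entire with zeros at the poles of $\Gamma$, and a short check (or continuity in $\gamma$) shows the identity persists there as well, so for $t,h>0$ no exceptional values actually need to be excluded.
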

 
 \begin{proof} 
\cite[p. 104]{FiniteDifferences}.
 \end{proof} 

Thus the factorial polynomials behave similarly with respect to forward differences as typical polynomials do with respect to differentiation. The three asymptotic equivalences mentioned earlier are utilised to prove the following Lemma.
 
 \begin{lemma}\label{lemma:factorial_pol_asymptotic}
Let $\gamma \in \R$. Then the factorial polynomial is asymptotically equal to a polynomial with the same exponent, i.e.
\begin{equation}
t^{(\gamma)_h} \sim t^\gamma, \hspace{0.5cm} \text{as } t \rightarrow \infty.
\end{equation}
 \end{lemma}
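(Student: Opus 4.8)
The plan is to reduce everything to Stirling's formula via the Gamma-function definition of the factorial polynomial. Writing $u = t/h$, so that $u \to \infty$ precisely when $t \to \infty$, the definition gives
\begin{equation*}
t^{(\gamma)_h} = h^\gamma \frac{\Gamma(u+1)}{\Gamma(u-\gamma+1)}.
\end{equation*}
For $t$ (hence $u$) large enough, both arguments $u+1$ and $u-\gamma+1$ are positive and tend to $+\infty$, so Stirling's formula (\ref{eq:Stirling}) applies to each Gamma factor. Since asymptotic equivalence is preserved under quotients, it suffices to show $\Gamma(u+1)/\Gamma(u-\gamma+1) \sim u^\gamma$, because then $t^{(\gamma)_h} \sim h^\gamma u^\gamma = h^\gamma (t/h)^\gamma = t^\gamma$, which is the claim.

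Next I would apply (\ref{eq:Stirling}) to numerator and denominator, writing $\Gamma(u-\gamma+1) = \Gamma((u-\gamma)+1) \sim \sqrt{2\pi(u-\gamma)}\left(\frac{u-\gamma}{e}\right)^{u-\gamma}$, and split the resulting quotient into two pieces: the square-root factor $\sqrt{u}/\sqrt{u-\gamma}$ and the power factor $(u/e)^u / \left(\frac{u-\gamma}{e}\right)^{u-\gamma}$. The square-root factor tends to $1$ by the second asymptotic equivalence of Example \ref{ex:asymptotic} (with $x = \gamma$), so it contributes nothing to the limit.

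The power factor is where the first equivalence of Example \ref{ex:asymptotic} does the real work. Cancelling the exponentials gives
\begin{equation*}
\frac{(u/e)^u}{\left(\frac{u-\gamma}{e}\right)^{u-\gamma}} = \frac{u^u}{e^\gamma (u-\gamma)^{u-\gamma}},
\end{equation*}
and substituting $(u-\gamma)^{u-\gamma} \sim e^{-\gamma} u^{u-\gamma}$ (the first equivalence with $x = \gamma$) collapses this to $u^u / u^{u-\gamma} = u^\gamma$. Combining the two factors yields $\Gamma(u+1)/\Gamma(u-\gamma+1) \sim u^\gamma$, completing the argument.

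I expect the main obstacle to be purely bookkeeping: keeping the exponents straight through the cancellation and justifying that the chain of asymptotic equivalences may be multiplied and divided term by term, which follows directly from the definition $\lim f/g = 1$ together with the algebra of limits, valid once all factors are eventually nonzero. There is no analytic difficulty beyond ensuring we work in the large-$t$ regime, so that Stirling's formula and both equivalences of Example \ref{ex:asymptotic} are legitimately applicable.
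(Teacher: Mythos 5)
Your proof is correct and takes essentially the same route as the paper's own argument: substituting $u = t/h$, applying Stirling's formula (\ref{eq:Stirling}) to both Gamma factors, and invoking the two asymptotic equivalences of Example \ref{ex:asymptotic} with $x = \gamma$ to handle the square-root and power factors. Your version merely makes the bookkeeping more explicit by splitting the quotient into the two factors before passing to the limit, so nothing further is needed.
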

 
 \begin{proof} 
To simplify notations, we first write $t/h = x$. Then we replace the gamma function with its asymptotically equivalent Stirling's formula (\ref{eq:Stirling})

 \begin{equation}
t^{(\gamma)_h} = \frac{h^\gamma\Gamma\left(x + 1\right)}{\Gamma\left(x - \gamma +1\right)} \sim \frac{h^\gamma\sqrt{2 \pi x} \left( \frac{x}{e} \right)^x}{\sqrt{2 \pi x-\gamma} \left( \frac{x-\gamma}{e} \right)^{x-\gamma}}, \hspace{0.5cm} \text{as } t \rightarrow \infty.
 \end{equation}
Next we use the asymptotic formulas from the Example \ref{ex:asymptotic} and tidy up the results 
\begin{equation}
t^{(\gamma)_h} \sim \frac{h^\gamma\sqrt{2 \pi x} \left( \frac{x}{e} \right)^x}{\sqrt{2 \pi x} \left( \frac{x}{e} \right)^{x-\gamma}e^{-\gamma}} = h^\gamma \left( \frac{x}{e} \right)^{\gamma}e^{\gamma} = h^\gamma \left( \frac{t}{h} \right)^\gamma = t^\gamma,
 \end{equation}
as $t \rightarrow \infty$.
 
 \end{proof} 
 
\section{Known results on smoothness and Fourier decay}\label{sec:3}

First we state from the literature that $L^2$ H\"older continuity and the tail sum of Fourier coefficients squared give a simple bijective result.

 \begin{theorem}
Suppose that $f\in L^2(0,T)$ and $T$-periodic. Then \\ $\left(\sum_{|k| > N}|c_k(f)|^2 \right)^{1/2} = O(1/N^{m+\mu})$, for some $m = 0, 1, 2, \ldots$ and $\mu \in (0,1)$ if and only if $f \in C^2_{m,\,\mu}[0,T]$ and $f \in W_{m}^1[0,T]$ (when $m > 0$).
 \end{theorem}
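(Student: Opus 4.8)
The engine of the whole argument is Parseval's theorem, which renders both sides of the equivalence as weighted sums of $|c_k(f)|^2$. The plan is to record first the two identities on which everything rests. Writing $f_h(t) = f(t+h)$ one has $c_k(f_h) = e^{i2\pi kh/T}c_k(f)$, so that
\begin{equation*}
\Vert f_h - f \Vert_2^2 = \sum_{k=-\infty}^\infty 4\sin^2\!\left(\frac{\pi k h}{T}\right)|c_k(f)|^2 .
\end{equation*}
Integration by parts together with $T$-periodicity gives $c_k(f^{(j)}) = (i2\pi k/T)^j c_k(f)$ whenever $f \in W_j^1[0,T]$, and hence $\Vert f^{(m)}\Vert_2^2 = \sum_k (2\pi|k|/T)^{2m}|c_k(f)|^2$ by Parseval again. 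These two formulas translate the $L^2$ modulus of continuity and the passage to derivatives entirely into the language of Fourier coefficients.

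First I would settle the base case $m = 0$, proving that $\bigl(\sum_{|k|>N}|c_k(f)|^2\bigr)^{1/2} \in O(N^{-\mu})$ if and only if $f \in C^2_\mu[0,T]$. For the implication from H\"older continuity to decay, I specialise $h \approx T/(4N)$ in the identity above so that $\sin^2(\pi kh/T) \geq 1/2$ on the dyadic block $N \leq |k| \leq 2N$; this bounds that block's contribution by a multiple of $h^{2\mu}$, that is by $O(N^{-2\mu})$, and summing the resulting geometric series over dyadic blocks (which converges because $\mu > 0$) yields the tail bound. For the converse I split the identity at $|k| \approx 1/h$: on the low frequencies I use $\sin^2 \leq (\pi kh/T)^2$ and estimate $\sum_{|k|\le N} k^2 |c_k(f)|^2$ by Abel summation against the tail sums $R_N = \sum_{|k|>N}|c_k(f)|^2 \in O(N^{-2\mu})$, which produces $O(N^{2-2\mu})$ precisely because $\mu < 1$; on the high frequencies I bound $\sin^2 \leq 1$ and use the tail directly. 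Both pieces come out as $O(h^{2\mu})$, giving $\Vert f_h - f\Vert_2 \in O(h^\mu)$.

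To reach general $m$ the idea is to apply the base case to $g = f^{(m)}$. In the forward direction a second Abel summation shows that $\sum_{|k|>N}|c_k(f)|^2 \in O(N^{-2(m+\mu)})$ forces $\sum_k |k|^{2m}|c_k(f)|^2 < \infty$; this is exactly $f \in H^m(0,T) = W^{m,2}$, which on a bounded interval is contained in $W_m^1[0,T]$ and legitimises the differentiation rule, so $g = f^{(m)} \in L^2(0,T)$ with $c_k(g) = (i2\pi k/T)^m c_k(f)$. The same summation by parts, now transferring the weight $|k|^{2m}$ across the tail, converts the hypothesis into $\sum_{|k|>N}|c_k(g)|^2 \in O(N^{-2\mu})$, whence the base case gives $g \in C^2_\mu$, that is $f \in C^2_{m,\mu}[0,T]$. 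The reverse direction is easier: assuming $f \in C^2_{m,\mu}[0,T]$ and $f \in W_m^1[0,T]$, the base case applied to $g$ gives $\sum_{|k|>N}(2\pi|k|/T)^{2m}|c_k(f)|^2 \in O(N^{-2\mu})$, and pulling out the factor $(2\pi N/T)^{2m}$, which is a lower bound for the weight on the tail, immediately yields $\sum_{|k|>N}|c_k(f)|^2 \in O(N^{-2(m+\mu)})$.

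The main obstacle is the family of Abel (partial) summation estimates that shuttle the $O$-bounds between tail sums carrying different powers of $|k|$: keeping track of the boundary terms and checking that the attendant series of powers converges or diverges at exactly the right rate is where the constraint $0 < \mu < 1$ is consumed, since the low-frequency estimate $\sum_{|k|\le N} k^2|c_k(f)|^2 \in O(N^{2-2\mu})$ fails for $\mu \ge 1$ while the geometric and tail summations fail for $\mu \le 0$. A secondary point needing care is the clean identification of the auxiliary function $g$ with the derivative $f^{(m)}$, together with the equivalence between the $L^2$ summability condition $\sum_k|k|^{2m}|c_k(f)|^2 < \infty$ and membership in $W_m^1[0,T]$; this is the Sobolev fact underlying why absolute continuity of $f^{(m-1)}$ must be listed as a separate clause of the statement.
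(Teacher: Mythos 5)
Your proposal is correct, and it is essentially the standard argument: the paper itself gives no proof of this theorem, deferring to Pinsky (p.~40 and the exercise on p.~43) and Serov (pp.~44--46), and the proof in those references is exactly your route --- Parseval applied to $\Vert f_h - f\Vert_2^2 = \sum_k 4\sin^2(\pi k h/T)\,|c_k(f)|^2$, with the choice $h \asymp T/N$ and dyadic blocks for one direction, the split at $|k|\approx 1/h$ with Abel summation (consuming $\mu<1$) for the other, and reduction to $g=f^{(m)}$ via $c_k(f^{(m)})=(i2\pi k/T)^m c_k(f)$ for $m>0$. Your handling of the $W_m^1[0,T]$ clause, derived in the forward direction from $\sum_k |k|^{2m}|c_k(f)|^2<\infty$ and assumed in the reverse direction to legitimise the coefficient identity, matches the role that hypothesis plays in the cited sources.
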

 
 \begin{proof} 
The case $m = 0$ in \cite[p. 40]{Pinsky} and the cases $m > 0$ stated on page 43. The case $m=0$ also in \cite[pp. 41--43]{Serov}.
 \end{proof} 

There are also well-known simple bounds for the decay of Fourier coefficients of H\"older continuous functions.

 \begin{theorem}\label{th:Zygmund_Holder}
Suppose that $f$ is $T$-periodic, $f \in W_{m}^1[0,T]$ (when $m > 0$) and $f \in C_{m,\,\mu}[0,T]$ or $f \in C_{m,\,\mu}^p[0,T]$ for some $\mu \in (0,1]$. Then
\begin{equation}
c_k(f) = O(1/|k|^{m+\mu})
\end{equation}
 \end{theorem}

 \begin{proof} 
Case $m= 0$ proved in \cite[p. 46]{Zygmund1}, \cite[pp. 34--35]{Serov}. For $m > 0$ we use the equality $c_k\left(f^{(m)}\right) = \left(\frac{i2\pi k}{T}\right)^m c_k(f)$, since $f \in W_{m}^1[0,T]$.
 \end{proof} 
 
It is also noteworthy that $\mu$-H\"older continuous functions with $0.5 < \mu < 1$ are "tamer" in many ways when compared to ones with $0 < \mu \leq 0.5$. This is seen for example in the absolute summability of the Fourier coefficients.

 \begin{lemma}
Let $0.5 < \mu <1$ and suppose that $f \in C^2_{\mu}[0,T]$ and $T$-periodic. Then the Fourier coefficients $c_k(f) \in l^1(\Z)$. For $\mu \leq 0.5$ this is not necessarily true.
 \end{lemma}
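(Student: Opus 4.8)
The plan is to recognise this statement as the $L^2$-version of the classical Bernstein theorem on absolute convergence, and to prove it by the standard dyadic decomposition of frequency combined with the Cauchy--Schwarz inequality. The starting point is to rewrite the $L^2$-H\"older hypothesis on the frequency side. Since $f$ is $T$-periodic and $c_k(f_h) = e^{i2\pi kh/T}c_k(f)$, Parseval's identity gives $\Vert f_h - f\Vert_2^2 = \sum_{k} |c_k(f)|^2\, |e^{i2\pi kh/T}-1|^2 = 4\sum_{k} |c_k(f)|^2 \sin^2(\pi kh/T)$. Thus $f \in C^2_\mu[0,T]$ yields, for all $0 < h < h_0$, the inequality $4\sum_{k}|c_k(f)|^2\sin^2(\pi kh/T) \leq C^2 h^{2\mu}$. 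This is the single analytic fact I would extract before doing any combinatorics.

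Next I would localise this inequality in dyadic frequency blocks. For $j$ large enough I choose the difference interval $h_j = T/2^{j+1}$, so that whenever $2^{j-1} < |k| \leq 2^j$ the argument $\pi |k| h_j/T$ lies in $(\pi/4, \pi/2]$ and therefore $\sin^2(\pi k h_j/T) \geq 1/2$. Restricting the nonnegative sum to this block and discarding all other terms gives $2\sum_{2^{j-1} < |k| \leq 2^j} |c_k(f)|^2 \leq C^2 h_j^{2\mu}$, i.e. $\sum_{2^{j-1} < |k| \leq 2^j} |c_k(f)|^2 \leq C' 2^{-2\mu j}$ for a constant $C'$ depending only on $C$, $T$, $\mu$. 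Since each such block contains at most $2^j$ integers $k$, the Cauchy--Schwarz inequality applied blockwise produces $\sum_{2^{j-1} < |k| \leq 2^j} |c_k(f)| \leq (2^j)^{1/2}\big(C' 2^{-2\mu j}\big)^{1/2} = \sqrt{C'}\,2^{(1/2-\mu)j}$. Because $\mu > 1/2$, the exponent $1/2-\mu$ is strictly negative, so summing over $j$ gives a convergent geometric series; adding the single term $|c_0(f)|$ shows $\sum_k |c_k(f)| < \infty$, that is $c_k(f) \in l^1(\Z)$.

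For the second assertion I would exhibit a sharp example using the bijective $L^2$ result quoted earlier (the case $m=0$). Taking coefficients $c_k = |k|^{-(\mu+1/2)}$ for $k \neq 0$ and $c_0 = 0$, the tail computation $\big(\sum_{|k|>N}|c_k|^2\big)^{1/2} \sim N^{1/2-(\mu+1/2)} = N^{-\mu}$ shows, by that theorem, that the associated function lies in $C^2_\mu[0,T]$; yet $\sum_k |c_k| = \sum_k |k|^{-(\mu+1/2)}$ diverges precisely when $\mu + 1/2 \leq 1$, i.e. when $\mu \leq 1/2$. Thus for every $\mu \leq 1/2$ there is an $L^2$-H\"older function of that order whose Fourier coefficients fail to be absolutely summable, and (choosing $\mu = 1/2$, $c_k = |k|^{-1}$) a single concrete example even works simultaneously for all $\mu \leq 1/2$ since $C^2_{1/2} \subset C^2_\mu$.

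The only genuinely delicate step is the dyadic localisation: everything hinges on the lower bound $\sin^2(\pi k h_j/T) \geq 1/2$ holding uniformly across an entire dyadic block, which is exactly what forces the particular choice $h_j = T/2^{j+1}$. Once that uniform lower bound is in place the rest is a routine application of Cauchy--Schwarz and summation of a geometric series, and the strict inequality $\mu > 1/2$ enters only at the very end to guarantee geometric decay; the borderline behaviour at $\mu = 1/2$ is what the counterexample is designed to expose.
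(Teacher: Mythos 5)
Your proof is correct and is essentially the argument the paper relies on: the paper gives no proof of its own for this lemma, citing Pinsky (pp.~43--44), where the statement is established by exactly this classical Bernstein-type mechanism --- Parseval applied to $f_h - f$, dyadic frequency blocks with $h_j = T/2^{j+1}$ chosen so that $\sin^2(\pi k h_j/T) \geq 1/2$ throughout each block, and blockwise Cauchy--Schwarz producing a geometric series precisely when $\mu > 1/2$. The only cosmetic gloss is that besides $|c_0(f)|$ you must also absorb the finitely many blocks with $h_j \geq h_0$ (finitely many coefficients, each finite, so harmless), and your sharpness example for $\mu \leq 1/2$, built from the paper's bijective $L^2$ tail theorem with $c_k = |k|^{-(\mu+1/2)}$ (or the single choice $c_k = |k|^{-1}$ covering all $\mu \leq 1/2$ at once), is a clean and fully adequate way to settle the second assertion.
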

 \begin{proof} 
\cite[pp. 43--44]{Pinsky}.
 \end{proof} 

In the other direction it is actually easier to study the tail sums of Fourier coefficients, if they are summable, i.e. $c_k(f) \in l^1(\Z)$.
 
 \begin{theorem}\label{theorem:tailsum_holder}
Suppose that $\sum_{|k|> N} |k|^m |c_k(f)| = O(1/N^{\mu})$, for some $m = 0, 1, 2, \ldots$, and $\mu \in (0,1)$. Then $f \in C_{m,\,\mu}[0,T]$.
 \end{theorem}
 
 \begin{proof} 
\cite[pp. 21--22, 46--47]{Serov}.
 \end{proof} 
 
We see that Theorems \ref{th:Zygmund_Holder} and \ref{theorem:tailsum_holder} are not symmetric and that it is generally harder to estimate the uniform H\"older continuity from the Fourier coefficients than to bound the coefficients if the regularity is known. The next two examples show that Theorem \ref{th:Zygmund_Holder} is sharp but also that we can easily find simple H\"older continuous functions whose Fourier coefficients decay faster than it predicts.

\begin{example} \label{ex:Weierstrass}
Many of the first examples of nowhere differentiable but everywhere continuous functions were defined with the help of Fourier series. The first of these published (but not the first discovered) was the \textit{Weierstrass function}
\begin{equation}
\sum_{k=0}^\infty a^{k} \cos(b^k \pi t),
\end{equation}
which for $0 < a < 1$, $ab, b > 1$ is continuous and nowhere differentiable \cite{Hardy}. In Weierstrass's original proof he assumed that $ab > 1+ \frac{3}{2}\pi$ and $b$ is a positive odd integer. Writing $\mu = -\ln(a)/\ln(b)$, we get
\begin{equation}
w_\mu (t) = \sum_{k=1}^\infty b^{-k\mu} \cos(b^k \pi t),
\end{equation}
and from this form it is proved in \cite[p. 47]{Zygmund1} that for $0 < \mu < 1$, $w_\mu \in C_\mu[0,2]$. Writing $m = b^k$, we see that the Fourier coefficients $c_m(w_\mu) = O(|m|^{-\mu})$ \cite[p. 48]{Zygmund1}. The study of these kinds of fractal functions which are defined via the Fourier series is still an active field, as, for example, the paper \cite{HolderExp} demonstrates.
\end{example}

\begin{example} \label{ex:simple_holder_functions}
Let us define a function that is a prototypical example of the sort of functions that satisfy Theorem \ref{Th1}

\begin{equation}
g_\mu(x) = |x|^\mu, \hspace{0.5cm} \text{for } -\pi \leq x \leq \pi,
\end{equation}
where $0 < \mu < 1$. It is stated in \cite[p. 42]{Pinsky} that the Fourier coefficients $c_k(g_\mu) = O(1/|k|^{1+\mu})$, although one can criticise that partial integration was used twice in the proof, since the second application gives a divergent integral. For this reason I will go through the correct proof. The integral is divided into two parts after the partial integration and change of variables $x = t/k$, $\mathrm{d}x = \mathrm{d}t/k$ (here we consider $k > 0$)
\begin{align}
&c_k(g_\mu) = \frac{1}{2\pi} \int_{-\pi}^\pi |x|^\mu e^{-ikx}\,\mathrm{d}x = \frac{1}{\pi} \int_0^\pi x^\mu \cos(kx)\,\mathrm{d}x \nonumber\\
&= 0 - \frac{1}{\pi} \int_0^\pi \mu x^{\mu-1} \frac{\sin(kx)}{k}\,\mathrm{d}x = \frac{-\mu}{\pi k^{1+\mu}} \int_0^{k\pi} t^{\mu-1} \sin(t)\,\mathrm{d}t \nonumber\\
&= \frac{\mu}{\pi k^{1+\mu}} \left( \int_0^\infty t^{\mu-1} \sin(t)\,\mathrm{d}t - \int_{k\pi}^\infty t^{\mu-1} \sin(t)\,\mathrm{d}t \right),
\end{align}
where the first integral is a \textit{Mellin transform} $\mathcal{M}\big\{\sin(t)\big\}(\mu)$ independent of $k$ and the second one a tail integral which decays to zero as $k \rightarrow \infty$, i.e. it is in $o(1)$. This estimation is also discussed in \cite[p. 34]{Serov}.

Figure \ref{fig:alpha07} shows the function $|x|^{0.7}$ sampled on the interval $[-1,1]$ with $2\cdot 10^5$ samples and figure \ref{fig:alpha07_fourier_coeff} shows the decay of its DFT on a log-log scale from $k = 0$ to $10^5-1$. One can estimate the slope of the curve using the marked points
\begin{equation}
\frac{\log(0.003635) - \log(2.073\cdot 10^{-8})}{\log(9) - \log(9999)} = -1.721\ldots \approx -1.7
\end{equation}
so the DFT decays like $O(1/|k|^{1.7})$ to one decimal accuracy. Since the Fourier coefficients are only approximated with the DFT, there is some aliasing error present and it affects the higher frequencies the most.

\begin{figure}[h!]
\centerline{
\includegraphics[scale=0.28]{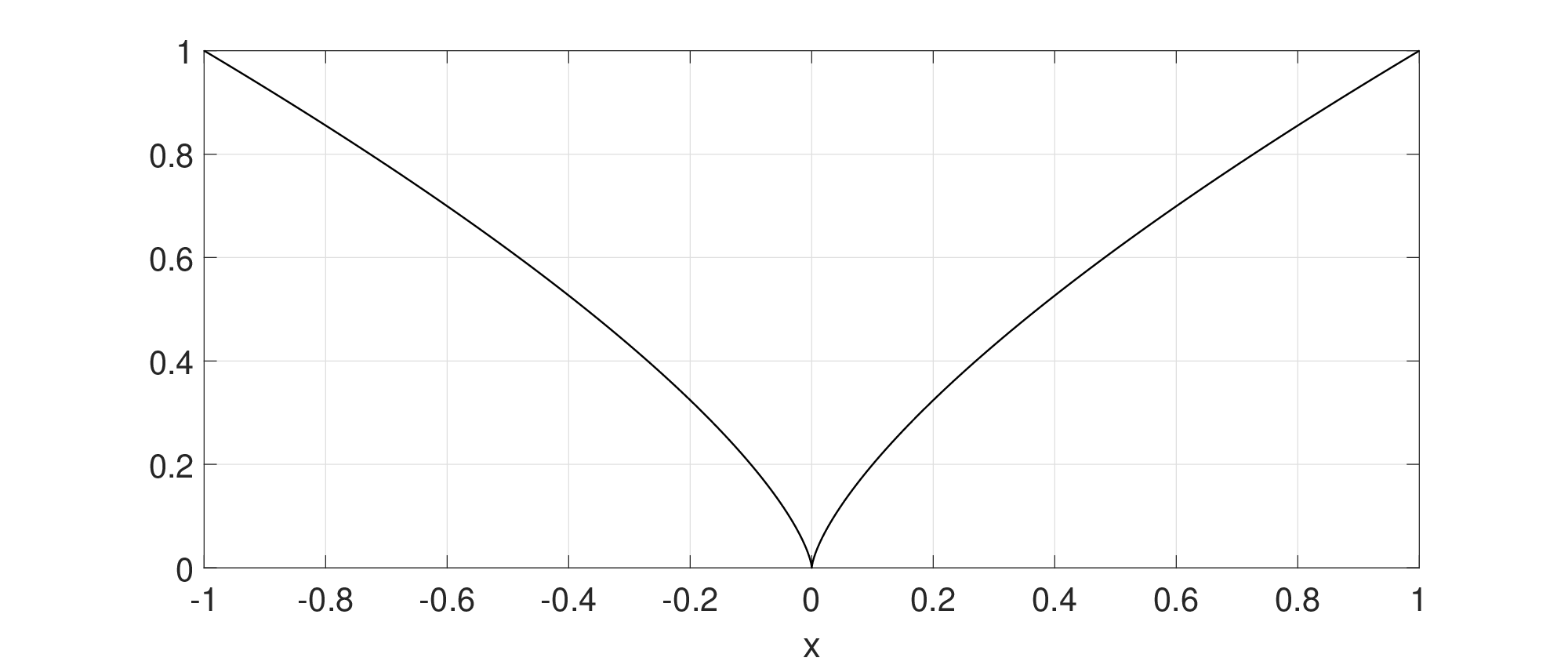}
}
\caption{Function $|x|^{0.7}$ plotted on the interval $[-1,1]$ with $2\cdot 10^5$ samples}
\label{fig:alpha07}
\end{figure}

\begin{figure}[h!]
\centerline{
\includegraphics[scale=0.28]{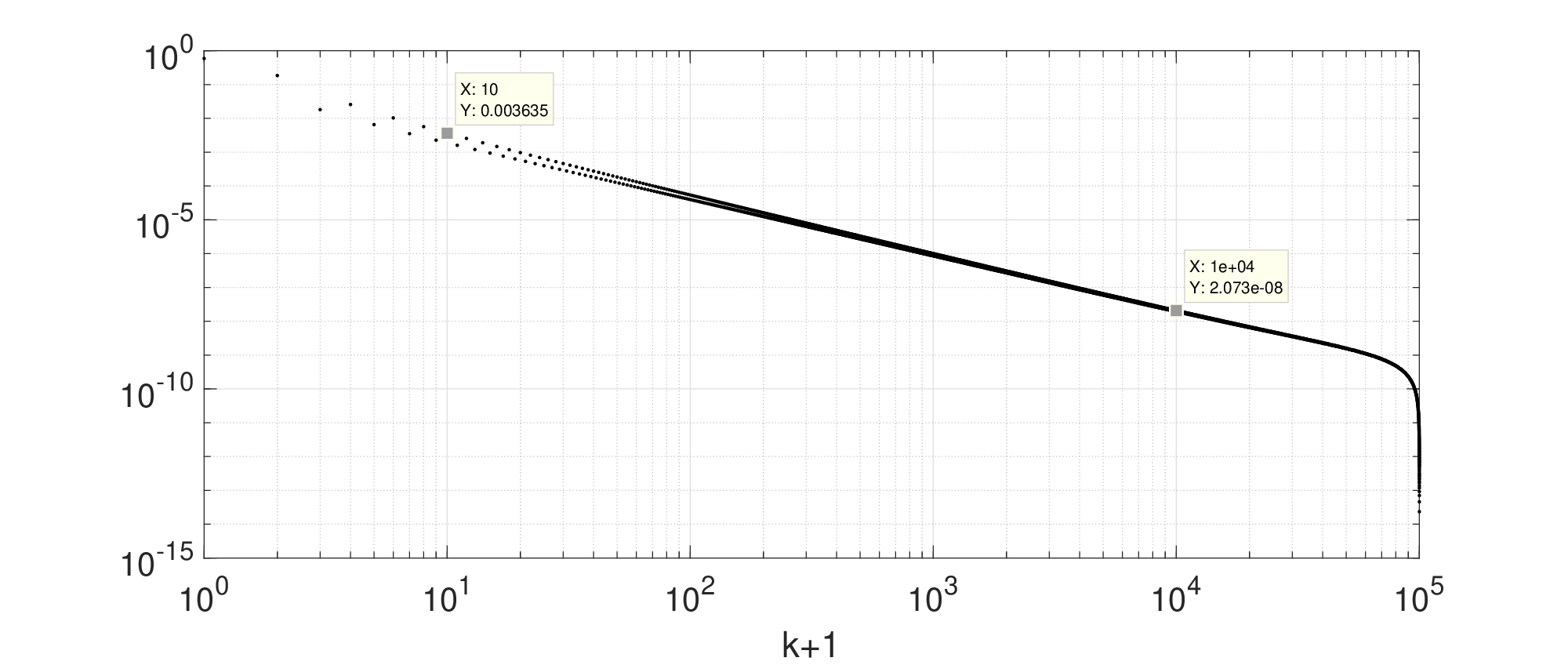}
}
\caption{Absolute values of the DFT of the samples of $|x|^{0.7}$ from figure \ref{fig:alpha07} from $k = 0$ to $10^5-1$ on a log-log scale}
\label{fig:alpha07_fourier_coeff}
\end{figure}
\end{example}

Such examples motivated the author to find exact smoothness conditions which explain the additional $1/|k|$ decay for H\"older continuous functions which behave better than for example the fractal Weierstrass function. Two simple function spaces exist which cause this kind of behaviour.

\begin{lemma}
Suppose that $f \in BV[0,T]$. Then $c_k(f) = O(1/|k|)$.
\end{lemma}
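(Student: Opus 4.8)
The plan is to exploit the oscillation of the Fourier kernel against the finite variation of $f$ through an integration by parts in the Riemann--Stieltjes sense. Fix $k \neq 0$; since $f \in BV[0,T]$ is bounded it lies in $L^1(0,T)$, so $c_k(f)$ in (\ref{eq:ck}) is well defined. I would write the kernel as the derivative of $\Phi_k(t) = \frac{-T}{i2\pi k}\,e^{-i2\pi k t/T}$, which is smooth and satisfies $|\Phi_k(t)| = \frac{T}{2\pi|k|}$ for every $t$. The point is that one factor of $1/|k|$ is gained by trading the integral of $f$ against $e^{-i2\pi k t/T}$ for an integral of the uniformly small antiderivative $\Phi_k$ against the variation of $f$.

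Carrying this out, integration by parts gives
\begin{equation*}
\int_0^T f(t)\,e^{-i2\pi k t/T}\,\mathrm{d}t = \bigl[f(t)\Phi_k(t)\bigr]_0^T - \int_0^T \Phi_k(t)\,\mathrm{d}f(t).
\end{equation*}
Because $e^{-i2\pi k} = 1$ we have $\Phi_k(T) = \Phi_k(0)$, so the boundary term collapses to $\Phi_k(0)\bigl(f(T)-f(0)\bigr)$, which is already $O(1/|k|)$ and vanishes when $f$ is genuinely $T$-periodic. For the remaining Stieltjes integral I would bound the integrand by its supremum and the variation measure by the total variation of Definition \ref{Def:boundedvariation}, obtaining $\bigl|\int_0^T \Phi_k\,\mathrm{d}f\bigr| \le \frac{T}{2\pi|k|}\,V_0^T(f)$. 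Dividing by $T$ as in (\ref{eq:ck}) then yields the explicit estimate $|c_k(f)| \le \frac{1}{2\pi|k|}\bigl(|f(T)-f(0)| + V_0^T(f)\bigr)$, which is exactly $c_k(f) \in O(1/|k|)$.

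The main point needing care is the justification of the Riemann--Stieltjes integration by parts and the interpretation of $\int_0^T \Phi_k\,\mathrm{d}f$ for a complex-valued function of bounded variation. To stay entirely within the tools developed above I would instead reduce to monotone functions: split $f$ into real and imaginary parts (each of bounded variation, since $|\re f(t_k)-\re f(t_{k-1})| \le |f(t_k)-f(t_{k-1})|$ and likewise for $\im f$) and then invoke Lemma \ref{lemma:BV_AC_difference} to write each real part as a difference of two increasing functions. By the linearity of the Fourier coefficients together with Lemma \ref{lemma:BVsum}, it then suffices to prove the estimate for a single increasing $g$. For such $g$ the second mean value theorem for integrals (Bonnet's form), applied separately to $\cos(2\pi k t/T)$ and $\sin(2\pi k t/T)$, replaces each integral by boundary values of $g$ times a partial integral of the kernel; since every $\int_a^b \cos(2\pi k t/T)\,\mathrm{d}t$ and $\int_a^b \sin(2\pi k t/T)\,\mathrm{d}t$ is bounded in modulus by $T/(\pi|k|)$, this again produces an $O(1/|k|)$ bound. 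I expect this second route to be the more robust, obstacle-free argument, at the cost of being slightly longer than the direct integration by parts.
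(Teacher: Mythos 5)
Your proposal is correct, and it is essentially the argument behind the paper's proof, which is given only by citation: the reference to Zygmund \cite[p.~48]{Zygmund1} is precisely the classical Riemann--Stieltjes integration-by-parts bound $|c_k(f)| \le \frac{1}{2\pi|k|}\left(|f(T)-f(0)| + V_0^T(f)\right)$ that you derive, and your fallback via Lemma \ref{lemma:BV_AC_difference} and the second mean value theorem is the standard way to make it rigorous for complex-valued $f$ without developing the Stieltjes machinery. You also correctly keep the boundary term $|f(T)-f(0)|$ rather than assuming periodicity, which matters since the lemma as stated only assumes $f \in BV[0,T]$.
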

 \begin{proof} 
\cite[p. 48]{Zygmund1}, \cite{Taibleson}.
 \end{proof} 

Similar decay rate is of course achieved for some subsets of $BV[0,T]$, which contains for example piecewise continuous or monotone functions. These kind of special function spaces are also utilised to estimate the errors when estimating Fourier transforms and coefficients with the DFT for example in \cite{Becker_error_FFT}, \cite[pp. 184--211]{TheDFT}.

\begin{lemma}
Suppose that $f \in AC[0,T]$ and $T$-periodic. Then $c_k(f) = o(1/|k|)$.
\end{lemma}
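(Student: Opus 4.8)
The plan is to reduce the statement to the Riemann--Lebesgue lemma by relating the Fourier coefficients of $f$ to those of $f'$ through integration by parts, which is exactly where the absolute continuity is used. Since $f \in AC[0,T]$, Definition \ref{Def:absolutecontinuity} guarantees that $f'$ exists a.e., lies in $L^1(0,T)$, and recovers $f$ through its integral in the Lebesgue sense; this is precisely what makes the integration-by-parts formula valid for $f$ even though $f$ need not be $C^1$.

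First I would start from the defining integral \eqref{eq:ck} and integrate by parts with $u = f(t)$ and $\mathrm{d}v = e^{-i2\pi kt/T}\,\mathrm{d}t$, so that $v = \frac{T}{-i2\pi k}e^{-i2\pi kt/T}$ for $k \neq 0$. This produces a boundary term together with the contribution $\frac{T}{i2\pi k}\cdot\frac{1}{T}\int_0^T f'(t)e^{-i2\pi kt/T}\,\mathrm{d}t$. Next I would observe that the boundary term vanishes: because $f$ is $T$-periodic we have $f(T) = f(0)$, while $e^{-i2\pi k \cdot T/T} = e^{-i2\pi k} = 1 = e^{0}$, so the two endpoint contributions cancel. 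Collecting the result yields the key identity $c_k(f) = \frac{T}{i2\pi k}\,c_k(f')$ for every $k \neq 0$.

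It then remains to take the limit. Since $f' \in L^1(0,T)$, the Riemann--Lebesgue lemma gives $c_k(f') \to 0$ as $|k| \to \infty$. Consequently $|k|\,|c_k(f)| = \frac{T}{2\pi}\,|c_k(f')| \to 0$, which is exactly the assertion $c_k(f) \in o(1/|k|)$.

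The only genuinely delicate point is the justification of integration by parts for a function that is merely absolutely continuous rather than continuously differentiable; this is the very step that fails for a general $BV$ function, whose singular and jump parts (cf. the decomposition in Lemma \ref{lemma:Lebesgue_decomposition}) prevent its derivative from recovering it through integration, and it is what upgrades the $O(1/|k|)$ bound of the $BV$ case to the sharper $o(1/|k|)$ obtained here. Both the integration-by-parts formula for absolutely continuous functions and the Riemann--Lebesgue lemma are standard and may be quoted from the literature.
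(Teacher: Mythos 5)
Your proof is correct and follows essentially the same route as the paper: the paper's proof invokes the Riemann--Lebesgue lemma for $f' \in L^1(0,T)$ together with the identity $c_k(f') = \frac{i2\pi k}{T}\,c_k(f)$ (used explicitly elsewhere in the paper for absolutely continuous $f$), which is exactly the integration-by-parts relation you derive. You merely spell out the details the paper leaves implicit, namely the validity of integration by parts for $AC$ functions and the cancellation of the boundary term by periodicity.
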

 \begin{proof} 
The Lemma follows directly from the Riemann-Lebesgue-lemma, which states that $c_k(f) = o(1)$ if $f \in L^1(0,T)$. The proof of Riemann-Lebesgue-lemma can be found for example in \cite[p. 45]{Zygmund1}, \cite[pp. 33--34]{Serov} and \cite[p. 18]{Pinsky}.
 \end{proof} 
 
Absolutely continuous functions are of bounded variation \cite[p. 337]{Kolmogorov}, which explains why in the previous Lemmas their Fourier coefficients decay slightly faster. It is also easy to show that Lipschitz continuous functions are in $AC$, but H\"older continuous functions with $0<\mu<1$ are not necessarily so \cite[p. 6]{Fiorenza}. Thus, our investigations to study the bounded variation property and absolute continuity together with H\"older continuity are valid.

Functions which are both H\"older continuous and of bounded variation do indeed have quicker decay of Fourier coefficients in the sense of summability.

\begin{lemma}\label{lemma:BVandHolder}
Suppose that $f \in BV[0,T] \cap C_\mu[0,T]$ with $0 < \mu < 1$ and $f$ is $T$-periodic. Then $c_k(f) \in l^1(\Z)$.
\end{lemma}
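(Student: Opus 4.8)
The plan is to prove absolute summability by a dyadic block decomposition, combining both hypotheses through the $L^2$ modulus of continuity rather than through their separate pointwise decay rates. This detour is forced on us: by Theorem~\ref{th:Zygmund_Hölder} the H\"older condition alone gives only $c_k(f) \in O(1/|k|^\mu)$, which is not summable for $\mu \leq 1$, while bounded variation alone gives only $c_k(f) \in O(1/|k|)$, which is borderline non-summable. The gain must come from letting the two estimates reinforce one another. First I would group the nonzero coefficients into dyadic blocks $B_n = \{k \in \Z : 2^n \leq |k| < 2^{n+1}\}$ and apply the Cauchy--Schwarz inequality inside each block, using $|B_n| = 2^{n+1}$:
\[
\sum_{k \in B_n} |c_k(f)| \leq |B_n|^{1/2} \left( \sum_{k \in B_n} |c_k(f)|^2 \right)^{1/2} \leq \sqrt{2}\,2^{n/2} \left( \sum_{k \in B_n} |c_k(f)|^2 \right)^{1/2}.
\]

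Next I would control the block $\ell^2$-mass by the $L^2$ modulus of continuity. Since $f_h = f(\cdot + h)$ has coefficients $c_k(f_h) = e^{i 2\pi k h/T} c_k(f)$, Parseval's identity (in the normalisation of Definition~\ref{Def:Lpholdercontinuity}) yields
\[
\Vert f_h - f \Vert_2^2 = \sum_{k} \big| e^{i 2\pi k h/T} - 1 \big|^2 |c_k(f)|^2 = 4 \sum_{k} \sin^2\!\left( \frac{\pi k h}{T} \right) |c_k(f)|^2.
\]
Choosing $h = T/2^{n+2}$ forces $\pi k h/T \in [\pi/4, \pi/2)$ for every $k \in B_n$, so that $\sin^2(\pi k h/T) \geq \tfrac12$ on the block, and hence $\sum_{k \in B_n} |c_k(f)|^2 \leq \tfrac12 \Vert f_h - f \Vert_2^2$.

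The crux is then to bound $\Vert f_h - f \Vert_2^2$ using \emph{both} hypotheses at once. I would factor $|f(t+h)-f(t)|^2 = |f(t+h)-f(t)| \cdot |f(t+h)-f(t)|$, estimate the first factor by $C h^\mu$ from $f \in C_\mu[0,T]$, and integrate the second using the bounded-variation estimate
\[
\int_0^T |f(t+h) - f(t)| \, \mathrm{d}t \leq h\, V_0^T(f),
\]
which follows by writing the increment as a Stieltjes integral over the signed measure of $f$ (a difference of two increasing measures by Lemma~\ref{lemma:BV_AC_difference}, and reduced to the complex case via Lemma~\ref{lemma:BVsum}) and applying Fubini, where $T$-periodicity guarantees that each point is covered by a $t$-interval of length exactly $h$. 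Together these give $\Vert f_h - f \Vert_2^2 \leq C T^{-1} V_0^T(f)\, h^{1+\mu}$. Substituting $h = T/2^{n+2}$ produces $\sum_{k \in B_n} |c_k(f)|^2 \leq C' 2^{-n(1+\mu)}$, so the Cauchy--Schwarz step above becomes $\sum_{k \in B_n} |c_k(f)| \leq C'' 2^{-n\mu/2}$. Summing the geometric series over $n \geq 0$ and adding the single term $|c_0(f)|$ gives $\sum_k |c_k(f)| < \infty$, since $\mu > 0$.

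I expect the bounded-variation integral inequality to be the only genuine obstacle; the remaining steps are bookkeeping. One must check carefully that the periodic extension introduces no spurious jump when the increment $f(t+h)-f(t)$ is interpreted as a Stieltjes integral, which is precisely where the $T$-periodicity hypothesis is used. The conceptual point worth stressing is that $L^2$ averaging is exactly what allows the two individually failing rates to combine: the extra factor of $h$ supplied by bounded variation (in place of a second factor $h^\mu$) upgrades the expected H\"older modulus $h^{2\mu}$ to $h^{1+\mu}$, and it is this upgrade that pushes the summability threshold from Bernstein's $\mu > 1/2$ down to all $\mu > 0$.
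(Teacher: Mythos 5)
Your proposal is correct and is essentially the classical Zygmund argument that the paper invokes for this lemma via its references (Serov, pp.~59--60; Pinsky, p.~44): Parseval applied to $f_h-f$, the combined estimate $\Vert f_h - f\Vert_2^2 \leq C h^{1+\mu}$ obtained by pairing one H\"older factor with the bounded-variation bound $\int_0^T |f(t+h)-f(t)|\,\mathrm{d}t \leq C h\,V_0^T(f)$, and Cauchy--Schwarz over dyadic blocks with $h \asymp T/2^{n}$, followed by summation of the geometric series $\sum_n 2^{-n\mu/2}$. All steps check out (only cosmetic points: the angle condition should read $\pi |k| h/T \in [\pi/4,\pi/2)$, and periodicity may force the variation over $[0,T+h]$, costing a harmless factor of $2$), so there is nothing to add.
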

 \begin{proof} 
\cite[pp. 57]{Serov}, \cite[p. 44]{Pinsky}.
 \end{proof} 

Nevertheless, we can rule out these functions from our considerations via a counterexample, which is H\"older continuous and of bounded variation but its Fourier coefficients decay only like $c_k(f) = O(1/|k|)$. This is the Cantor-Lebesgue function, another famous function with fractal properties.

\begin{example}\label{ex:Cantor-Lebesgue}
The \textit{Cantor-Lebesgue function} is a continuous and increasing function defined on the interval $[0,1]$. The construction uses the fractal Cantor set, and is described for example in \cite[pp. 334 -- 335]{Kolmogorov} and \cite[pp. 194 -- 196]{Zygmund1}. Zygmund presents the theory for a more general class of functions, Kolmogorov for the classic case where the Cantor set is constructed by always removing the middle thirds of the intervals at each step. The Cantor-Lebesgue function can then be thought of as the cumulative distribution of the Cantor set.

The derivative of this function is 0 almost everywhere although its values increase continuously from 0 to 1. Hence it is not absolutely continuous. It is of bounded variation and the classic case (now denoted by $f$) is also in $C_\mu[0,1]$ with $\mu = \ln(2)/\ln(3)$, which by Lemma \ref{lemma:BVandHolder} means that the Fourier coefficients of the periodic function $f^*(t) = f(t) - t$ are in $l^1(\Z)$. Nevertheless, the decay rate of $c_k(f^*)$ is only $O(1/|k|)$ \cite[pp. 196 -- 197]{Zygmund1}.
\end{example}

Thus, we are left to check the case of functions which are in $AC[0,T] \cap C_\mu[0,T]$. Before proving Theorem \ref{Th1}, let us also state here some results in the other direction from the literature regarding absolute continuity. The first one is a direct consequence of the Riesz-Fischer theorem of the isomorphism between square summable Fourier coefficients and functions in $L^2(0,T)$.

\begin{lemma}\label{lemma:absolute_from_f_coeffs}
Suppose that $f \in H^m(0,T)$ for some $m = 1, 2, 3, \ldots$. Then $f^{(m-1)}$ is a.e. equal to an absolutely continuous function with derivative $f^{(m)}\in L^2(0,T)$ and $c_k\left(f^{(m)}\right) = \left(\frac{i2\pi k}{T} \right)^m \, c_k(f)$.
\end{lemma}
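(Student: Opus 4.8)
The plan is to use the Riesz--Fischer isomorphism to manufacture a candidate for $f^{(m)}$ directly from the coefficient sequence, and then to verify by integration that this candidate really is the $m$-th derivative of a good representative of $f$. First I would observe that the hypothesis says precisely that the sequence $a_k := \left(\frac{i2\pi k}{T}\right)^m c_k(f)$ is square summable, since $|a_k|^2 = (2\pi/T)^{2m} k^{2m}|c_k(f)|^2$ and the $k=0$ term vanishes for $m \geq 1$. By the Riesz--Fischer theorem there is then a unique $g \in L^2(0,T)$ whose Fourier coefficients are exactly the $a_k$; in particular $c_0(g) = a_0 = 0$. The entire task reduces to showing that this $g$ equals $f^{(m)}$ in the stated sense.

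I would carry this out by induction on $m$, the substance being the base case $m=1$. Suppose $\sum_k k^2 |c_k(f)|^2 < \infty$ and let $g \in L^2(0,T)$ be the function produced above, so $c_k(g) = \frac{i2\pi k}{T} c_k(f)$ and $c_0(g) = 0$. Set $G(t) = \int_0^t g(\tau)\,\mathrm{d}\tau$. Since $g \in L^2(0,T) \subset L^1(0,T)$, Definition \ref{Def:absolutecontinuity} gives $G \in AC[0,T]$ with $G' = g$ a.e. Because $G(T) - G(0) = \int_0^T g = T\, c_0(g) = 0$, the function $G$ extends to a $T$-periodic absolutely continuous function. Integration by parts, legitimate for the product of the $AC$ function $G$ with a smooth exponential, yields for $k \neq 0$ the identity $c_k(G) = \frac{T}{i2\pi k} c_k(g) = c_k(f)$, the boundary terms cancelling by periodicity. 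Thus $G$ and $f$ have identical Fourier coefficients for every $k \neq 0$, and after replacing $G$ by $G + \big(c_0(f) - c_0(G)\big)$ they agree at $k=0$ as well. By the uniqueness half of the Riesz--Fischer correspondence, $f = G + \text{const}$ almost everywhere, so $f$ is a.e. equal to an absolutely continuous function with $f' = g \in L^2(0,T)$ and $c_k(f') = c_k(g) = \frac{i2\pi k}{T} c_k(f)$.

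For the inductive step I would note that $\sum_k k^{2m}|c_k(f)|^2 < \infty$ forces $\sum_k k^{2(m-1)}|c_k(f)|^2 < \infty$, so the inductive hypothesis applies and $f^{(m-1)} \in L^2(0,T)$ with $c_k(f^{(m-1)}) = \left(\frac{i2\pi k}{T}\right)^{m-1} c_k(f)$. A direct computation then gives $\sum_k k^2 |c_k(f^{(m-1)})|^2 = (2\pi/T)^{2(m-1)} \sum_k k^{2m}|c_k(f)|^2 < \infty$, so applying the base case to $f^{(m-1)}$ shows it is a.e. equal to an $AC$ function with derivative $f^{(m)} \in L^2(0,T)$ and $c_k(f^{(m)}) = \frac{i2\pi k}{T} c_k(f^{(m-1)}) = \left(\frac{i2\pi k}{T}\right)^m c_k(f)$, closing the induction.

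The step I expect to be the main obstacle is the base case, specifically the justification that $G$ is genuinely $T$-periodic and that integration by parts may be applied to an absolutely continuous function, both of which hinge on the vanishing of $c_0(g)$, together with the appeal to uniqueness of Fourier coefficients to upgrade ``equal Fourier coefficients'' to ``equal almost everywhere.'' Everything else is bookkeeping with the multiplier $\frac{i2\pi k}{T}$.
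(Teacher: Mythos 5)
Your proposal is correct and takes essentially the same route as the paper, which simply cites Pinsky's proof of the case $m=1$ (built, exactly as you do, on the Riesz--Fischer theorem, integration of the candidate derivative $g$, and uniqueness of Fourier coefficients) and notes that the cases $m>1$ follow by induction. Your write-up supplies precisely the details the paper delegates to its references, with the key steps (periodicity of $G$ from $c_0(g)=0$, integration by parts for $AC$ functions, constant adjustment at $k=0$) correctly justified.
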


 \begin{proof} 
In \cite[pp. 37 -- 38]{Pinsky} case $m = 1$ is proved and for $m > 1$ stated (follows with induction). Proof of the Riesz-Fischer theorem on page 37 also. Proved also in \cite[pp. 39--40]{Serov}.
 \end{proof} 

The previous result can be easily utilised to obtain an interesting result: suppose that the Fourier coefficients of a $T$-periodic function $f$ decay like $c_k(f) = O(1/|k|^{1+m+\mu})$, with $\mu \in \left(0.5,1\right)$, then $f \in W_{m+1}^1[0,T]$ as well. This follows from the following estimate and Lemma \ref{lemma:absolute_from_f_coeffs}
\begin{equation}\label{eq:simple_functions_AC}
\sum_{k = -\infty}^\infty k^{2(m+1)} |c_k(f)|^2 \leq \sum_{\substack{k = -\infty\\ k \neq 0}}^\infty k^{2m+2} \frac{C}{|k|^{2+2m+2\mu}} = \sum_{\substack{k = -\infty\\ k \neq 0}}^\infty \frac{C}{|k|^{2\mu}} < \infty.
\end{equation}

It is not possible to deduce the absolute continuity from the Fourier coefficients in the case $\mu \in (0, 0.5)$. First hint into this direction was made in 1936 by J.E. Littlewood \cite{Littlewood} with the following counterexample.

 \begin{theorem}\label{th:Littlewood}
There exists an increasing function $f$ with $f'(t) = 0$ for a.e. $t \in [0, T]$ (hence $f$ is not absolutely continuous) and a positive real number $\mu$ such that the Fourier coefficients of the periodic function $f^*(t) = f(t) - \frac{t}{T} \big(f(T)-f(0)\big)$ decay like 
\begin{equation}
c_k(f^*) = O\left(1/|k|^{1+\mu}\right).
\end{equation}
 \end{theorem}
From (\ref{eq:simple_functions_AC}) we know that the number $\mu$ can be at most 1/2 in Littlewood's Theorem. It was actually proved by Wiener and Wintner in 1938 \cite{WienerWintner} that for every $\mu < 1/2$ such non-absolutely continuous functions exist. This proof can also be found in \cite[pp. 146 -- 147]{Zygmund2}. In 1939 \cite{Schaeffer} Schaeffer sharpened this result slightly by proving that for any increasing sequence $r(k)$ that approaches $\infty$ as $k \rightarrow \infty$ (no matter how slowly), there exists a non-absolutely continuous function $f$ whose Fourier coefficients are
\begin{equation}
c_k(f) = O\left(\frac{r(|k|)}{|k|^{1.5}}\right),
\end{equation}
but whether the case $\mu = 1/2$ in Theorem \ref{th:1part2} implies absolute continuity, is probably still an open question.

For completeness, it is fitting to mention here one famous and also current example of these singular non-absolutely continuous functions. To this end, it is helpful to define the Fourier-Stieltjes coefficients and consider their relationship with the Fourier coefficients.

 \begin{definition}
\textit{The Fourier-Stieltjes coefficients} of $g \in BV[0,T]$ are defined by the Riemann-Stieltjes integral
\begin{equation}
c^{\text{FS}}_k(g) = \frac{1}{T}\int_0^T e^{-i2\pi kt/T} \mathrm{d} g(t)
\end{equation}
 \end{definition}
 
 \begin{lemma}\label{FSC_and_FC}
For an increasing function $g\in BV[0,T]$ and its periodic extension $g^*(t) = g(t) - \frac{t}{T} \big(g(T)-g(0)\big)$, we have the following equality of Fourier-Stieltjes and Fourier coefficients for all $k \neq 0$
\begin{equation}
c^{\text{FS}}_k(g) = \frac{i2\pi k}{T}c_k(g^*) = c_k\left( \frac{\mathrm{d}}{\mathrm{d}t}g^* \right).
\end{equation}
 \end{lemma}

 \begin{proof}
\cite{Zygmund1}[p. 41].
 \end{proof} 
 
Lemma \ref{FSC_and_FC} has already appeared in disguise in Example \ref{ex:Cantor-Lebesgue} and Theorem \ref{th:Littlewood} when these singular functions were discussed. Thus we see that with the Fourier-Stieltjes coefficients even the derivatives of singular functions can be meaningfully studied with Fourier methods.

 \begin{example}

Let the continued-fraction representation of the real number $x$ be $[a_0;a_1,a_2,\ldots]$. Then \textit{the Minkowski question mark function} $?$ on $x = [0,1]$ is defined by
\begin{equation}
?(x) = a_0 + 2\sum_{n=1}^\infty \frac{(-1)^{n+1}}{2^{a_1+\ldots+a_n}}.
\end{equation}

The function $?(x)$ is strictly increasing and Salem \cite{Salem} proved that $?(x) \in C_\mu[0,1]$ with
\begin{equation}
\mu = \frac{\ln(2)}{2\ln \frac{\sqrt{5}+1}{2}} = 0.72021\ldots,
\end{equation}
and posed the Salem's problem: do the Fourier-Stieltjes coefficients of $?(x)$ decay to zero?

In 2013, T. Jordan and T. Sahlsten answered the Salem's problem affirmatively in a preprint of the paper \cite{JordanAndSahlsten} by proving, that the Fourier-Stieltjes coefficients of $?(x)$ decay like $c^{\text{FS}}_k(?) = O(|k|^{-\epsilon})$ for some unknown $0 < \epsilon \leq 1/2$. From the considerations above, it follows that the Fourier coefficients of $?(x)-x$ decay like $c_k\big(?(x)-x\big) = O(|k|^{-1-\epsilon})$.

Recently, it was also proved by E. P. Golubeva \cite{Golubeva} and N. V. Gorbatyuk \cite{Gorbatyuk}, that the Fourier-Stieltjes coefficients of the inverse of $?(x)$ defined also on $x = [0,1]$ do not decay to zero, i.e. $c^{\text{FS}}_k(?^{-1}) = O(1)$ and $c^{\text{FS}}_k(?^{-1}) \neq o(1)$. Thus in terms of Fourier decay, the function $?^{-1}$ has the same decay rate as the Cantor-Lebesgue function discussed in Example \ref{ex:Cantor-Lebesgue}, since $c_k\big(?^{-1}(x)-x\big) = O(|k|^{-1})$.

 \end{example}

\section{Proof of the main result}\label{sec:4}

 \begin{theorem} \label{th:1part1}
Suppose that $f \in W_{m+1}^1[0,T] \cap C_{m,\,\mu}[0,T]$ with some $\mu \in (0,1]$, $f$ is $T$-periodic and the number of local extrema of $\Delta_h f^{(m)}$ is uniformly bounded for every $0 < h \leq h_0$. Then the Fourier coefficients of $f$ decay like $c_k(f) = O(1/|k|^{1+m+\mu})$.
 \end{theorem}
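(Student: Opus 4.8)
The plan is to reduce to the base case $m=0$ and then convert all three hypotheses into a single sharp bound on the total variation of a finite difference. Put $g=f^{(m)}$. Because $f\in W_{m+1}^1[0,T]\subset W_m^1[0,T]$, the relation $c_k(f^{(m)})=(i2\pi k/T)^m c_k(f)$ holds (cf. Lemma \ref{lemma:absolute_from_f_coeffs}), so it suffices to show $c_k(g)\in O(1/|k|^{1+\mu})$; the factor $|k|^{-m}$ then yields the claimed rate. Note that $g$ is $T$-periodic (differentiate $f(t+T)=f(t)$), lies in $AC[0,T]$ by hypothesis, and by Hölder continuity satisfies $\sup_t|\Delta_h g(t)|\le Ch^\mu$ for $0<h\le h_0$.

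First I would record the classical half-period shift. Choosing $h_k=T/(2|k|)$ makes $e^{-i2\pi k h_k/T}=-1$, so the substitution $t\mapsto t+h_k$ in (\ref{eq:ck}) gives
\[
c_k(g)=-\tfrac12\,c_k(\Delta_{h_k}g).
\]
For $|k|$ large we have $h_k\le h_0$, so the oscillation hypothesis applies to $\Delta_{h_k}g$. Next I would bound the right-hand coefficient by a total variation. Since $g\in AC[0,T]\subset BV[0,T]$ is continuous and $T$-periodic, so is $\Delta_{h_k}g$ (it is the difference $g_{h_k}-g$ of absolutely continuous functions, hence absolutely continuous by Lemma \ref{lemma:ACsum}), and a Riemann--Stieltjes integration by parts with vanishing boundary terms gives the quantitative bound $|c_k(u)|\le V_0^T(u)/(2\pi|k|)$ for every continuous, $T$-periodic $u\in BV[0,T]$.

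It remains to estimate $V_0^T(\Delta_{h_k}g)$, and this is where the three assumptions combine. Interpreting the uniform oscillation bound as the existence of an $N_0$, independent of $h$, such that $\Delta_h g$ is piecewise monotone with at most $N_0$ monotone pieces for every $0<h\le h_0$, the variation on each piece is the oscillation of $\Delta_{h_k}g$ there, which is at most its global range $2\sup_t|\Delta_{h_k}g(t)|\le 2Ch_k^\mu$. Summing over the pieces,
\[
V_0^T(\Delta_{h_k}g)\le 2N_0 C h_k^\mu.
\]
Chaining the three displays and inserting $h_k=T/(2|k|)$ produces $|c_k(g)|\le \frac{N_0 C (T/2)^\mu}{2\pi}\,|k|^{-(1+\mu)}$, i.e. $c_k(g)\in O(1/|k|^{1+\mu})$, and hence $c_k(f)\in O(1/|k|^{1+m+\mu})$ after restoring the $|k|^{-m}$ factor.

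The main obstacle is the step bounding the variation by the oscillation count. The crude estimate $V_0^T(\Delta_h g)\le 2V_0^T(g)$, valid for any $BV$ function, is uniform in $h$ and only recovers the generic $O(1/|k|)$ decay; the improvement to $h^\mu$ decay of the variation requires genuinely using finitely many monotone pieces together with Hölder control of the amplitude. I would therefore make the notion of ``number of oscillations'' precise, confirm via the absolute continuity of $g$ (and Lemma \ref{lemma:BV_AC_difference}) that $\Delta_h g$ decomposes into legitimate monotone pieces, and check that $N_0$ can indeed be taken independent of $h$ as the hypothesis asserts. Everything else --- the shift identity, the integration-by-parts bound, and the supremum estimate from Definition \ref{Def:holdercontinuity} --- is routine; it is precisely the interaction of absolute continuity and finitely many oscillations that upgrades the $O(1/|k|)$ bound available for bare $BV$ functions to the sharp $O(1/|k|^{1+m+\mu})$.
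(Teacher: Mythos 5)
Your proof is correct, and it reaches the conclusion by a genuinely different mechanism than the paper, even though both arguments share the same pivotal estimate: partitioning $[0,T]$ into at most $L$ monotone pieces of $\Delta_h f^{(m)}$ and bounding each piece by the H\"older amplitude to get $V_0^T(\Delta_h f^{(m)}) \leq 2LC h^\mu$. The paper converts this variation bound into $L^1$ H\"older continuity of the \emph{next} derivative, using absolute continuity to write $T\Vert f^{(m+1)}_h - f^{(m+1)}\Vert_1 \leq 2V_0^T(\Delta_h f^{(m)})$ (via Lemma \ref{lemma:BV_AC_difference}, or by integrating $|g'|$ piecewise in the second proof), and then invokes Theorem \ref{th:Zygmund_Hölder} together with $c_k\big(f^{(m+1)}\big) = \big(\tfrac{i2\pi k}{T}\big)\, c_k\big(f^{(m)}\big)$. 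You instead fix the frequency-dependent shift $h_k = T/(2|k|)$, use the half-period identity $c_k\big(f^{(m)}\big) = -\tfrac12\, c_k\big(\Delta_{h_k} f^{(m)}\big)$, and apply the quantitative bound $|c_k(u)| \leq V_0^T(u)/(2\pi |k|)$ to $u = \Delta_{h_k} f^{(m)}$; since Theorem \ref{th:Zygmund_Hölder} for $p=1$ is itself proved by the same shift trick, your proof in effect fuses the two steps and bypasses the detour through $f^{(m+1)} \in C_\mu^1[0,T]$ entirely. What your route buys: an explicit constant $N_0 C (T/2)^\mu/(2\pi)$, and the sharper observation that absolute continuity of $f^{(m)}$ is never used at the top level --- only continuity, the bound $\sup_t |\Delta_h f^{(m)}(t)| \leq Ch^\mu$, and piecewise monotonicity of $\Delta_h f^{(m)}$ with a uniform piece count enter, while the Sobolev hypothesis is consumed solely by the reduction $c_k\big(f^{(m)}\big) = \big(\tfrac{i2\pi k}{T}\big)^m c_k(f)$, which needs just $f \in W_m^1[0,T]$. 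What the paper's route buys: the reusable intermediate statement $f^{(m+1)} \in C_\mu^1[0,T]$, which is exactly the form re-deployed on the real line in Theorem \ref{th:2part1} via Lemma \ref{lemma:L1_Holder_R}. Two minor points: Lemma \ref{lemma:absolute_from_f_coeffs} is the converse implication, so the derivative relation you cite is better justified by periodic integration by parts as in the proof of Theorem \ref{th:Zygmund_Hölder}; and you correctly note, which matters, that the argument applies only once $|k|$ is large enough that $h_k \leq h_0$, the finitely many remaining coefficients being harmless for an $O$-statement.
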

 
 \begin{proof} 
Suppose that $f \in C_{\mu}[0,T]$. The cases $m = 1, 2, \ldots$ follow with induction. Since $f$ is also absolutely continuous, we know that its derivative $f'$ exists a.e., is integrable and thus we can study the $L^1$ H\"older continuity of $f'$. Let us denote $g = \Delta_h f = f_h - f$. Then by Lemma \ref{lemma:ACsum}, $g$ is absolutely continuous and by Lemma \ref{lemma:BV_AC_difference}, we can decompose it also as a difference of two increasing absolutely continuous functions $g = g_1 - g_2$, where $g_1(t) = V_0^t(g)$ and $g_2(t) = V_0^t(g) - g(t)$. Then $g_1'(t), g_2'(t) \geq 0$ for almost every $t \in [0,T]$ and
\begin{align}
&T \Vert f'_h - f' \Vert_1 = \int_0^T |f'(t+h) - f'(t)| \mathrm{d}t \nonumber\\
&= \int_0^T |g_1'(t) - g_2'(t)| \mathrm{d}t \nonumber\\
&\leq  \int_0^T g_1'(t) \mathrm{d}t + \int_0^T g_2'(t) \mathrm{d}t \nonumber\\
&= g_1(T) - g_1(0) + g_2(T) - g_2(0) \nonumber\\
&= V_0^T(g) - V_0^0(g) + V_0^T(g) - g(T) - V_0^0(g) + g(0) \nonumber\\
&= 2 V_0^T(g) - g(T) + g(0) \nonumber\\
&= 2 V_0^T(g),
\end{align}
since $ V_0^0(g) = 0$ and $g(T) = g(0)$ since $g$ is clearly also $T$-periodic. Now let us partition the interval $[0,T]$ so that the partition points are the local minima and maxima of $g$. Then on all of the $M$ intervals between these points the function $g$ is either increasing or decreasing and by Lemma \ref{lemma:BVsum}

\begin{align}\label{eq:V0T_bounded}
&V_0^T(g) = \sum_{k=1}^M V_{t_{k-1}}^{t_k}(g) = \sum_{k=1}^M \big| g(t_k) - g(t_{k-1}) \big| \nonumber\\
&= \sum_{k=1}^M \big| f(t_k + h) - f(t_k) - f(t_{k-1} + h) + f(t_{k-1}) \big| \nonumber \\ 
&\leq \sum_{k=1}^M \Big(\big| f(t_k + h) - f(t_k) \big| + \big| f(t_{k-1} + h) - f(t_{k-1}) \big| \Big) \nonumber \\
&\leq 2MC |h|^\mu \leq 2LC |h|^\mu,
\end{align}
where $L$ is the supremum of the number of intervals on which $\Delta_h f$ is either increasing or decreasing over all $h>0$ small enough. Since we assumed that the number of local extrema of the function $\Delta_h f$ is uniformly bounded for every $0<h\leq h_0$, this supremum exists and is finite.

Thus $f' \in C^1_\mu[0,T]$ and it follows from Theorem \ref{th:Zygmund_Holder} that $c_k(f') = O(1/|k|^\mu)$. Since $f$ is absolutely continuous, we know that $c_k(f') = \frac{i2\pi k}{T} \, c_k(f)$ and thus $c_k(f) = O(1/|k|^{1+\mu})$.
 \end{proof} 

 \begin{remark} 
I will also outline another possible way to begin the proof. Here we do not use total variations, but instead the properties of derivatives.
 \end{remark}

 \begin{proof} 
Again let us partition the interval $[0,T]$ so that the partition points are the local minima and maxima of $g = \Delta_h f$. Then the derivative $g'$ has a constant sign in any of these intervals. Thus, we can evaluate the integral over all these intervals and get
\begin{align}
T\Vert f'_h - f' \Vert_1 &= \int_0^T |g'(t)| \, \mathrm{d}t = \sum_{k = 1}^M \int_{t_{k-1}}^{t_{k}} | g'(t)| \, \mathrm{d}t \nonumber\\
&= \sum_{k=1}^M \big| g(t_k) - g(t_{k-1}) \big|,
\end{align}
and the rest of the proof follows the previous proof from (\ref{eq:V0T_bounded}) onwards.
 \end{proof} 
 
 \begin{theorem}\label{th:1part2}
Suppose that the Fourier coefficients of a $T$-periodic function $f$ decay like $c_k(f) = O(1/|k|^{1+m+\mu})$, with some $\mu \in (0,1)$. Then $f \in C_{m,\,\mu}[0,T]$. Also if $\mu \in \left(0.5,1\right)$, then $f \in W_{m+1}^1[0,T]$ as well.
 \end{theorem}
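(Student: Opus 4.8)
The plan is to treat the two assertions separately, each reducing to a tail estimate on the coefficients together with one of the results already available in this section. For the H\"older part I would verify the hypothesis of Theorem \ref{theorem:tailsum_hölder}, and for the absolute continuity part I would verify the hypothesis of Lemma \ref{lemma:absolute_from_f_coeffs}. In both cases the decay $|c_k(f)| \leq C|k|^{-(1+m+\mu)}$ (valid for all $|k|$ large, the finitely many exceptional terms being harmless) does essentially all of the work, and the falling-factorial machinery of Section \ref{sec:2} is used only to make the power-sum estimates rigorous.

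For the first claim, multiplying the hypothesis by $|k|^m$ gives $|k|^m|c_k(f)| \leq C|k|^{-(1+\mu)}$, so that the tail sum obeys
\[
\sum_{|k|>N} |k|^m |c_k(f)| \leq 2C \sum_{k>N} k^{-(1+\mu)}.
\]
The remaining point is the estimate $\sum_{k>N} k^{-(1+\mu)} \in O(N^{-\mu})$. This is exactly where difference calculus enters: by Lemma \ref{lemma:factorial_pol_difference} the falling factorial $-\tfrac{1}{\mu}\, k^{(-\mu)_1}$ is an antidifference of $k^{(-1-\mu)_1}$, so Lemma \ref{lemma:fundamental_theorem_of_sum_calculus} evaluates the partial sums in closed form and the tail of $\sum k^{(-1-\mu)_1}$ equals $\tfrac{1}{\mu}(N+1)^{(-\mu)_1}$. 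Since Lemma \ref{lemma:factorial_pol_asymptotic} gives $k^{(-\mu)_1}\sim k^{-\mu}$ and $k^{(-1-\mu)_1}\sim k^{-1-\mu}$, the factorial tail is $O(N^{-\mu})$ and has the same order as the genuine power sum. Feeding $\sum_{|k|>N}|k|^m|c_k(f)|\in O(N^{-\mu})$ into Theorem \ref{theorem:tailsum_hölder} then yields $f\in C_{m,\,\mu}[0,T]$.

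For the second claim, assume in addition $\mu\in(0.5,1)$. Squaring the decay bound gives $|k|^{2(m+1)}|c_k(f)|^2 \leq C^2 |k|^{-2\mu}$, and since $2\mu>1$ the series $\sum_k |k|^{2(m+1)}|c_k(f)|^2$ converges by comparison with a power sum, estimated as above; in particular $f\in L^2(0,T)$. Applying Lemma \ref{lemma:absolute_from_f_coeffs} with its integer parameter taken to be $m+1$ then shows that $f^{(m)}$ agrees a.e.\ with an absolutely continuous function whose derivative $f^{(m+1)}$ lies in $L^2(0,T)$, which is precisely the statement $f\in W_{m+1}^1[0,T]$.

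I expect the only real subtlety to be the threshold in the second part: the comparison series $\sum |k|^{-2\mu}$ converges exactly when $\mu>0.5$, so this method delivers absolute continuity for $\mu\in(0.5,1)$ and cannot reach it for $\mu\le 0.5$, in agreement with the discussion in the introduction. The first part is essentially automatic once the tail estimate is in place; the only care needed is in confirming that replacing the power sum $\sum k^{-1-\mu}$ by the factorial sum $\sum k^{(-1-\mu)_1}$ preserves the $O(N^{-\mu})$ order, which follows from the two summands being asymptotically equal.
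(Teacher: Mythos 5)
Your proposal is correct and takes essentially the same route as the paper's own proof: the tail sum $\sum_{|k|>N}|k|^m|c_k(f)|$ is bounded by passing to falling factorials via Lemma \ref{lemma:factorial_pol_asymptotic}, summed in closed form with Lemmas \ref{lemma:factorial_pol_difference} and \ref{lemma:fundamental_theorem_of_sum_calculus}, and fed into Theorem \ref{theorem:tailsum_hölder}, while the absolute continuity for $\mu\in(0.5,1)$ follows from the comparison $\sum_k |k|^{2(m+1)}|c_k(f)|^2 \leq C\sum_{k\neq 0}|k|^{-2\mu}<\infty$ and Lemma \ref{lemma:absolute_from_f_coeffs} with parameter $m+1$, exactly as in the paper. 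Your explicit identification of $-\tfrac{1}{\mu}k^{(-\mu)_1}$ as the antidifference and your remark on why the method stops at $\mu=0.5$ only make explicit what the paper leaves implicit.
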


 \begin{proof} 
Suppose that $c_k(f) = O(1/|k|^{1 + m +\mu})$. Let us first estimate the tail sum
\begin{equation}
\sum_{|k|> N} |k|^m |c_k(f)| \leq\sum_{|k|> N} \frac{ C_1 }{|k|^{1+\mu}} \leq 2 \sum_{k = N+1}^\infty \frac{ C_1 }{k^{1+\mu}},
\end{equation}
and the sums are still clearly convergent. Then Lemma \ref{lemma:factorial_pol_asymptotic} allows us to change to factorial polynomials and Lemmas \ref{lemma:factorial_pol_difference} and \ref{lemma:fundamental_theorem_of_sum_calculus} to estimate the sum
\begin{align}
\sum_{|k|> N} |k|^m |c_k(f)| &\leq \sum_{k = N+1}^\infty \frac{ C_2 }{k^{(1+\mu)_h}} \leq \frac{C_3}{(N+1)^{(\mu)_h}} \nonumber\\
&\leq \frac{C_4}{(N+1)^\mu} \leq \frac{C_4}{N^\mu},
\end{align}
and Lemma \ref{lemma:factorial_pol_asymptotic} was used again. Now it follows from Theorem \ref{theorem:tailsum_holder} that $f \in C_{m,\,\mu}[0,T]$.

The absolute continuity in the case $\mu \in (0.5, 1)$ was already proved in (\ref{eq:simple_functions_AC}).
 \end{proof} 

 \begin{remark} 
Perhaps a little simpler proof would utilise that fact that we could also replace the infinite sums with integrals, since they are asymptotically equal in these cases. A similar result for multidimensional Fourier series can be found in \cite[p. 178]{Grafakos}, although in one dimension it only states that the decay rate $c_k(f) = O(1/|k|^{1+m+\mu})$ implies $C_\alpha[0,T]$ for all $\alpha < \mu$.
 \end{remark} 

\section{Chirps}\label{sec:5}

Next, we consider probably the simplest infinitely oscillating class of functions. These are called chirps and they have been studied a lot with wavelet theory \cite{Jaffard, Mallat}. The example is a lengthy one, but it provides us the information that the condition for finite oscillations is necessary in Theorem \ref{th:1part1}. 

We will use the following stationary phase approximation of an oscillatory integral in estimating the Fourier coefficients of chirps.

\begin{lemma}\label{lemma:stationary_phase}
Suppose $\phi$ is real-valued and smooth in $(a,b)$, and $|\phi^{(m)}(x)| \geq 1$ for all $x\in (a,b)$. Then
\begin{equation}
\Big| \int_a^b e^{i \phi(x) \nu} \psi(x)\,\mathrm{d}x \Big| \leq d_m \nu^{-\frac{1}{m}} \Big( |\psi(b)| + \int_a^b |\psi'(x)|\,\mathrm{d}x \Big),
\end{equation}
holds when $m \geq 2$ or when $m =1$ and $\phi'(x)$ is monotonic.
\end{lemma}

\begin{proof} 
Proposition 2 and its Corollary in \cite[pp. 332 -- 334]{Stein}.
\end{proof} 

\begin{example}\label{ex:chirp}
Let $\alpha, \beta > 0$. Then for $x \in [0,L]$ we define
\begin{equation}\label{eq:chirp}
f_{\alpha,\beta}(x) = x^\alpha \sin(1/x^\beta).
\end{equation}
It is immediate that (\ref{eq:chirp}) is pointwise $\alpha$-H\"older in the neighbourhood of 0 according to equation (\ref{eq:holdercontinuity}) if $0 < \alpha < 1$ and infinitely differentiable elsewhere. Decay of the wavelet coefficients of such functions reveal both of the exponents $\alpha$ and $\beta$ and thus wavelet analysis is clearly superior to Fourier analysis in the case of pointwise regularity and oscillation. Interested reader may study for example the excellent books \cite{Jaffard, Mallat}. Nevertheless, we are interested in knowing how the frequencies of these signals decay and what is their uniform H\"older continuity.

In \cite[p. 331]{Kolmogorov} it is stated that $f_{\alpha,\beta}$ is of bounded variation iff $\alpha > \beta$ and we will utilise this result now. Thus, let us suppose that $\alpha > \beta$. Due to the Lebesgue decomposition of a function of bounded variation in Lemma \ref{lemma:Lebesgue_decomposition}, we can deduce then that $\varphi$ and $\chi$ in this decomposition are both zero (since $f_{\alpha,\beta}$ is infinitely differentiable a.e. on $[0,L]$ and it is continuous on $[0,L]$) and thus $f_{\alpha,\beta} \in AC[0,L]$ iff $\alpha > \beta$.

Since Theorem \ref{th:1part1} concerns uniform H\"older continuity, we will next show that the functions in question are $C_{\alpha/(1+\beta)}[0,L]$ for any $L >0$ and for all $\alpha, \beta > 0$ such that $\alpha/(1+\beta) \leq 1$. The last limitation comes from the fact, that for the sake of simplicity we do not consider the H\"older continuity of the derivatives of $f_{\alpha,\beta}$ here. The proof of the case $\alpha = \beta = 1$ can be found in \cite[p. 6]{Fiorenza}, \cite{Zhang-Tang} and also in a discussion at the Mathematics Stack Exchange \cite{MathStackEx}, where username Gaultier sketched a proof that $x \sin(1/x)$  is 0.5-H\"older on $[0,1/2\pi]$. What follows is thus a generalisation of these proofs to more general chirps $f_{\alpha,\beta}$ with $\alpha/(1+\beta) \leq 1$.

Let us first suppose that $x,y \in \left[1/\big(2\pi(n+1)\big)^{1/\beta}, 1/(2\pi n)^{1/\beta}\right]$ with some $n = 1,2,3,\ldots$ and thus $x = 1/(2\pi n + \epsilon)^{1/\beta}$ and  $y = 1/(2\pi n + \delta)^{1/\beta}$ where $0\leq \epsilon, \delta, \leq 2\pi$. Then with the help of the Taylor series for sine and the binomial expansion of Newton we estimate
\begin{align}
&|f_{\alpha,\beta}(x) - f_{\alpha,\beta}(y)| \nonumber\\
&= \left| \frac{1}{(2\pi n + \epsilon)^{\frac{\alpha}{\beta}}}\sin(2\pi n + \epsilon) - \frac{1}{(2\pi n + \delta)^{\frac{\alpha}{\beta}}}\sin(2\pi n + \delta) \right| \nonumber\\
&= \left| \frac{1}{(2\pi n + \epsilon)^{\frac{\alpha}{\beta}}}\sin(\epsilon) - \frac{1}{(2\pi n + \delta)^{\frac{\alpha}{\beta}}}\sin(\delta) \right| \nonumber\\
&= \left| \frac{1}{(2\pi n + \epsilon)^{\frac{\alpha}{\beta}}} \big( \epsilon + O(\epsilon^3)\big) - \frac{1}{(2\pi n + \delta)^{\frac{\alpha}{\beta}}}\big(\delta + O(\delta^3)\big)\right| \nonumber\\
&= \left| \frac{(2\pi n + \delta)^{\frac{\alpha}{\beta}}\big( \epsilon + O(\epsilon^3)\big) - (2\pi n + \epsilon)^{\frac{\alpha}{\beta}}\big(\delta + O(\delta^3)\big)}{(4\pi^2 n^2 + 2\pi n\epsilon + 2\pi n \delta + \epsilon\delta)^{\frac{\alpha}{\beta}}} \right| \nonumber\\
&= \bigg| \frac{ \sum_{k=0}^\infty \binom{\alpha/\beta}{k}(2\pi n)^{\frac{\alpha}{\beta}-k} \delta^k\big( \epsilon + O(\epsilon^3)\big)}{(4\pi^2 n^2 + 2\pi n\epsilon + 2\pi n \delta + \epsilon\delta)^{\frac{\alpha}{\beta}}} \nonumber\\
&\hspace{1.0cm} -\frac{\sum_{k=0}^\infty\binom{\alpha/\beta}{k}(2\pi n)^{\frac{\alpha}{\beta}-k} \epsilon^k \big(\delta + O(\delta^3)\big)}{(4\pi^2 n^2 + 2\pi n\epsilon + 2\pi n \delta + \epsilon\delta)^{\frac{\alpha}{\beta}}} \bigg| \nonumber\\
&\leq \left| \frac{ \sum_{k=0}^\infty \binom{\alpha/\beta}{k}(2\pi n)^{\frac{\alpha}{\beta}-k} \left( \delta^k \epsilon\!-\!\epsilon^k\delta\!+\!O(\delta^k\epsilon^3)\!-\!O(\epsilon^k \delta^3) \right)}{(4\pi^2 n^2)^{\frac{\alpha}{\beta}}} \right| \nonumber\\
&\leq d_0 \left| \frac{ n^{\frac{\alpha}{\beta}} ( \epsilon - \delta)}{n^{2\frac{\alpha}{\beta}}} \right| 
\leq \frac{ d_0 | \epsilon - \delta|}{n^{\frac{\alpha}{\beta}}},
\end{align}
where we estimated the series with its biggest term $k = 0$ multiplied by some constant, as $\epsilon$ and $\delta$ are small. Next, we estimate $|x-y|$ from below
\begin{align}
&|x-y| = \left| \frac{(2\pi n + \delta)^{\frac{1}{\beta}} - (2\pi n + \epsilon)^{\frac{1}{\beta}}}{(2\pi n + \delta)^{\frac{1}{\beta}} (2\pi n + \epsilon)^{\frac{1}{\beta}}} \right| \nonumber\\
&\geq \left| \frac{\sum_{k=0}^\infty \binom{1/\beta}{k}(2\pi n)^{\frac{1}{\beta}-k}\delta^k\!-\! \sum_{k=0}^\infty \binom{1/\beta}{k}(2\pi n)^{\frac{1}{\beta}-k}\epsilon^k}{(4\pi n)^{\frac{2}{\beta}}} \right| \nonumber\\
&= \left| \frac{\sum_{k=0}^\infty \binom{1/\beta}{k}(2\pi n)^{\frac{1}{\beta}-k}\big( \delta^k - \epsilon^k\big)}{(4\pi n)^{\frac{2}{\beta}}} \right| \nonumber\\
&\geq d_1 \left| \frac{n^{\frac{1}{\beta}-1}( \delta - \epsilon)}{n^{\frac{2}{\beta}}} \right| = \frac{d_1 | \delta - \epsilon|}{n^{\frac{1}{\beta}+1}},
\end{align}
where we estimated the series from below by leaving only the term $k = 1$. To combine the two estimates, we need to raise the last inequality to a power $u$ such that
\begin{equation}
\left(\frac{1}{\beta} + 1 \right)u = \frac{\alpha}{\beta},
\end{equation}
from which we solve that $u = \alpha/(1+\beta)$. Thus we have
\begin{equation}
|x-y|^{\frac{\alpha}{1+\beta}} \geq \frac{d_2 | \delta - \epsilon|^{\frac{\alpha}{1+\beta}}}{n^{\frac{\alpha}{\beta}}}
\end{equation}
and finally
\begin{equation}
|f_{\alpha,\beta}(x) - f_{\alpha,\beta}(y)| \leq \frac{d_3 | \delta - \epsilon|^{\frac{\alpha}{1+\beta}}}{n^{\frac{\alpha}{\beta}}} \leq d_4 |x-y|^{\frac{\alpha}{1+\beta}}.
\end{equation}
To extend the result to the bigger interval $[0, (1/2\pi)^{1/\beta}]$ let $y$ be as before, but $x < 1/\big(2\pi(n+1)\big)^{1/\beta}$. Then because of the periodicity of sine and the decay of the function $f_{\alpha,\beta}$ towards $0$, we can find $z \in \left[1/\big(2\pi(n+1)\big)^{1/\beta}, 1/(2\pi n)^{1/\beta}\right]$ such that
\begin{align} \label{eq:chirp_regularity}
|f_{\alpha,\beta}(x) - f_{\alpha,\beta}(y)| &\leq |f_{\alpha,\beta}(z) - f_{\alpha,\beta}(y)| \nonumber\\
&\leq d_4 |z-y|^{\frac{\alpha}{1+\beta}} \nonumber\\
&\leq d_4 |x-y|^{\frac{\alpha}{1+\beta}}.
\end{align}
Since the function $f_{\alpha,\beta}$ does not oscillate at values $x > (2/\pi)^{1/\beta}$ and it is infinitely smooth and bounded there, we can conclude that $f_{\alpha,\beta} \in C_{\alpha/(1+\beta)}[0, L]$ for any $L > 0$.

Next we want to bound the decay rate of the Fourier coefficients $c_k(f_{\alpha,\beta})$. We extend the function periodically as an even function for $x \in [-\pi, \pi]$
\begin{equation}
f_{\alpha,\beta}(x) = |x|^\alpha \sin(1/|x|^\beta) = \frac{1}{2i}|x|^\alpha e^{i|x|^{-\beta}} - \frac{1}{2i}|x|^\alpha e^{-i|x|^{-\beta}}.
\end{equation}
The Fourier coefficients are
\begin{align}
&\frac{1}{2\pi} \int_{-\pi}^\pi f_{\alpha,\beta}(x) e^{-ikx}\,\mathrm{d}x = \frac{1}{\pi} \int_0^\pi f_{\alpha,\beta}(x) \cos(kx) \nonumber \\
&= \frac{1}{i4\pi} \int_0^\pi x^\alpha \big( e^{ix^{-\beta}} - e^{-ix^{-\beta}} \big) \big( e^{ikx} + e^{-ikx} \big) \,\mathrm{d}x ,
\end{align}
so there are four oscillatory integrals with differing signs. Let us consider one of them (a similar example with $-2 < \alpha \leq 0$ and $\beta = 1$ is considered in \cite[pp. 339 -- 341]{Stein})
\begin{equation}\label{eq:stat_phase_ck}
\int_0^\pi x^\alpha e^{i(x^{-\beta} + kx)}\,\mathrm{d}x,
\end{equation}
where the phase is $\phi(x) = x^{-\beta}+kx$. This phase is not exactly like in Lemma \ref{lemma:stationary_phase}, since it depends linearly on the frequency variable $k$. Nevertheless, the dominant contribution according to the stationary phase method comes from near the critical point $x_0$ (which is also a function of $k$)
\begin{equation*}
\phi'(x_0) = -\beta x_0^{-\beta-1} + k = 0 \hspace{0.5cm} \Rightarrow  \hspace{0.5cm} x_0 = \Big(\frac{k}{\beta}\Big)^{\frac{-1}{\beta+1}}
\end{equation*}
and $\phi''$ at this point is
\begin{align}\label{eq:stat_phase_phi_second_derivative}
\phi''(x_0) &= \beta(\beta+1)x_0^{-\beta-2} = \beta(\beta+1) \Big(\frac{k}{\beta}\Big)^{\frac{\beta+2}{\beta+1}} \nonumber\\
&= \beta^{\frac{-1}{\beta+1}}(\beta+1) k^{\frac{\beta+2}{\beta+1}}. 
\end{align}
Divide the integral (\ref{eq:stat_phase_ck}) into three parts
\begin{align}
&\int_0^a x^\alpha e^{i(x^{-\beta} + kx)}\,\mathrm{d}x + \int_a^b x^\alpha e^{i(x^{-\beta} + kx)}\,\mathrm{d}x + \int_b^\pi x^\alpha e^{i(x^{-\beta} + kx)}\,\mathrm{d}x \nonumber\\ &= I_1 + I_2 + I_3, 
\end{align}
so that $x_0 \in (a,b)$.
Near the critical point, i.e. in $(a,b)$ we can choose a suitable constant $d$ that is smaller than the constant in front of $k$ in (\ref{eq:stat_phase_phi_second_derivative})
\begin{equation} \label{eq:stat_phase_phi_bound}
\phi''(x) = \beta(\beta+1)x^{-\beta-2} \geq d k^{\frac{\beta+2}{\beta+1}}.
\end{equation}
When using the idea in Lemma \ref{lemma:stationary_phase}  in the case where the phase $\phi$ is a function of the frequency variable, the frequency variable does not factor out in front as $1/\nu^{1/m}$, but rather we have $1/(\phi^{(m)})^{1/m}$ instead.  Thus with $m = 2$, we get the main term of (\ref{eq:stat_phase_ck}) from $I_2$ while keeping in mind, that  $x_0$ and the bound (\ref{eq:stat_phase_phi_bound}) also depend on $k$
\begin{align}
I_2(k) &= O \Big( k^{\frac{\beta+2}{\beta+1}} \Big)^{-1/2} \cdot O\big(x_0^\alpha \big) \nonumber\\
&= O\Big( k^{-\frac{\beta/2+1}{\beta+1}} \Big) \cdot O\Big( k^{-\frac{\alpha}{\beta+1}} \Big) \nonumber\\
&= O\Big( k^{-\frac{1 + \alpha +\beta/2}{\beta+1}} \Big).
\end{align}

At the boundaries $x = -\pi$ and $x = \pi$ our evenly extended function is Lipschitz continuous, and thus the boundaries contribute a term of size $O(|k|^{-2})$. Thus the stationary phase approximation gives the estimate
\begin{equation} \label{eq:stat_phase_result}
c_k(f_{\alpha,\beta}) = O\left(1/|k|^{(1+\alpha + \frac{\beta}{2})/(\beta + 1)}\right) + O\big(1/|k|^2\big)
\end{equation}
As a particular example in the case $\alpha = 0.7$, $\beta = 0.5$ 
\begin{align}
c_k(f_{0.7,\,0.5}) &= O(1/|k|^2) + O\left(1/|k|^{(1+0.7 +0.25)/(0.5 + 1)}\right)  \nonumber\\
&= O\left(1/|k|^{1.3}\right).
\end{align}
We can say that the Fourier coefficients of the chirp $f_{0.7,\,0.5}$ decay like $O\left(1/|n|^{1.3}\right)$. This is verified by numerical calculations. Figure \ref{fig:infinitely_oscillating_alpha07_beta05} shows the function in question calculated on the interval $[-1,1]$ with $2\cdot 10^5$ samples. The absolute values of its DFT from $k = 0$ to $10^5-1$ are shown in figure \ref{fig:infinitely_oscillating_alpha07_beta05_fourier_coeff}
on a log-log scale. The curve can be bounded by a line with a slope
\begin{equation}
\frac{\log(3.708\cdot 10^{-5}) - \log(1.014\cdot 10^{-6})}{\log(541) - \log(8780)} = -1.291\ldots \approx -1.3,
\end{equation}
so indeed the DFT decays like $O\left(1/|n|^{1.3}\right)$ to one decimal accuracy. We also know from equation (\ref{eq:chirp_regularity}) that $f_{0.7,\,0.5} \in C_{7/15}[0, L]$, where $7/15 = 0.4666\ldots$. Thus the infinite oscillations affect the decay rate of the Fourier coefficients and we do not get $O\left(1/|n|^{1.4666\ldots}\right)$ as would be the case without the infinite oscillations according to Theorem \ref{th:1part1}.

Let us write another example of (\ref{eq:stat_phase_result}) with $\alpha = 0.9$ and $\beta = 0.4$
\begin{align}
c_k(f_{0.9,\,0.4}) &= O(1/|k|^2) + O\left(1/|k|^{(1 + 0.9 + 0.2)/(0.4+1)}\right) \nonumber\\
&= O\left(1/|k|^{1.5}\right).
\end{align}
Now based on equation (\ref{eq:chirp_regularity}) the signal is uniformly H\"older continuous with the exponent $9/14 \approx 0.6429$. Note that because $c_k(f_{0.9,\,0.4}) = O\left(1/|k|^{1.5}\right)$, Theorem \ref{th:1part2} reveals only that the H\"older exponent is at least $0.5$.

\begin{figure}[h!]
\centerline{
\includegraphics[scale = 0.28]{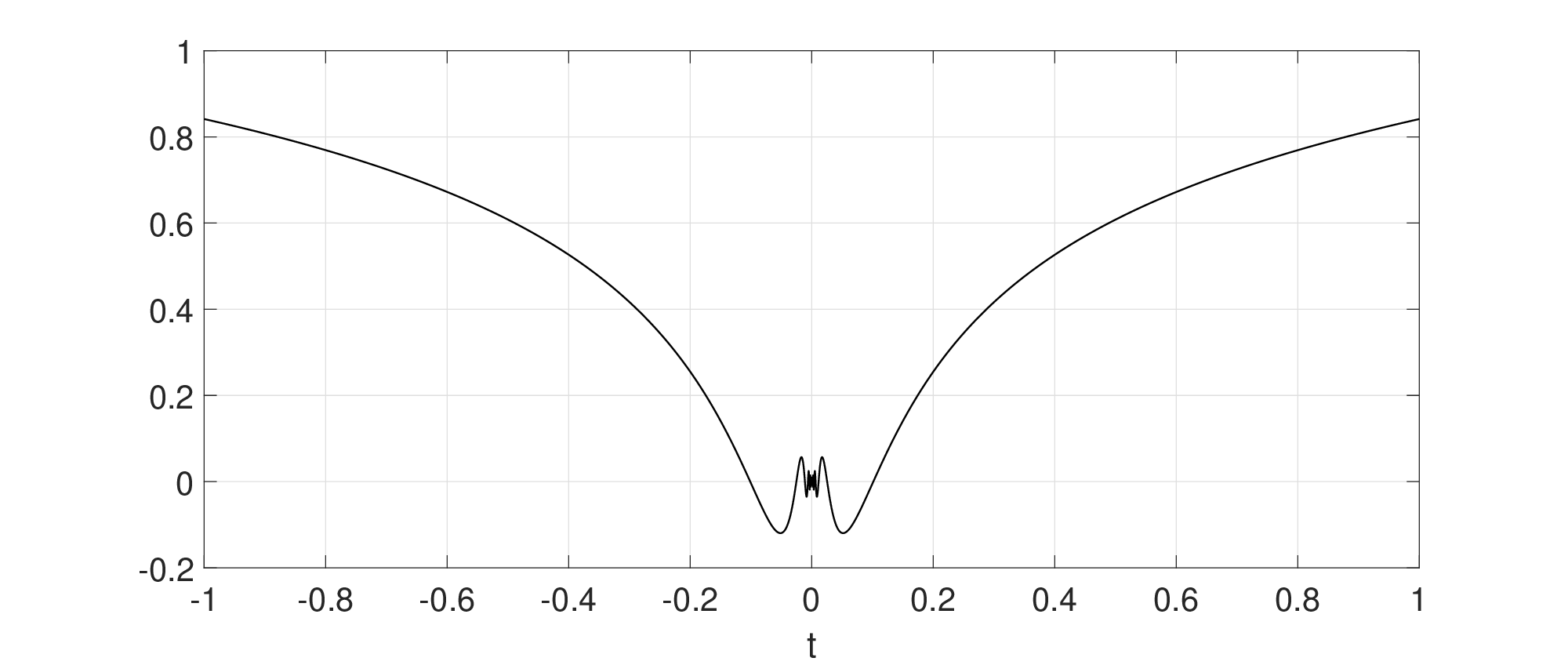}
}
\caption{Function $|x|^{0.7}\sin\left(1/|x|^{0.5}\right)$ on the interval $[-1,1]$ plotted with $2\cdot 10^5$ samples}
\label{fig:infinitely_oscillating_alpha07_beta05}
\end{figure}

\begin{figure}[h!]
\centerline{
\includegraphics[scale = 0.28]{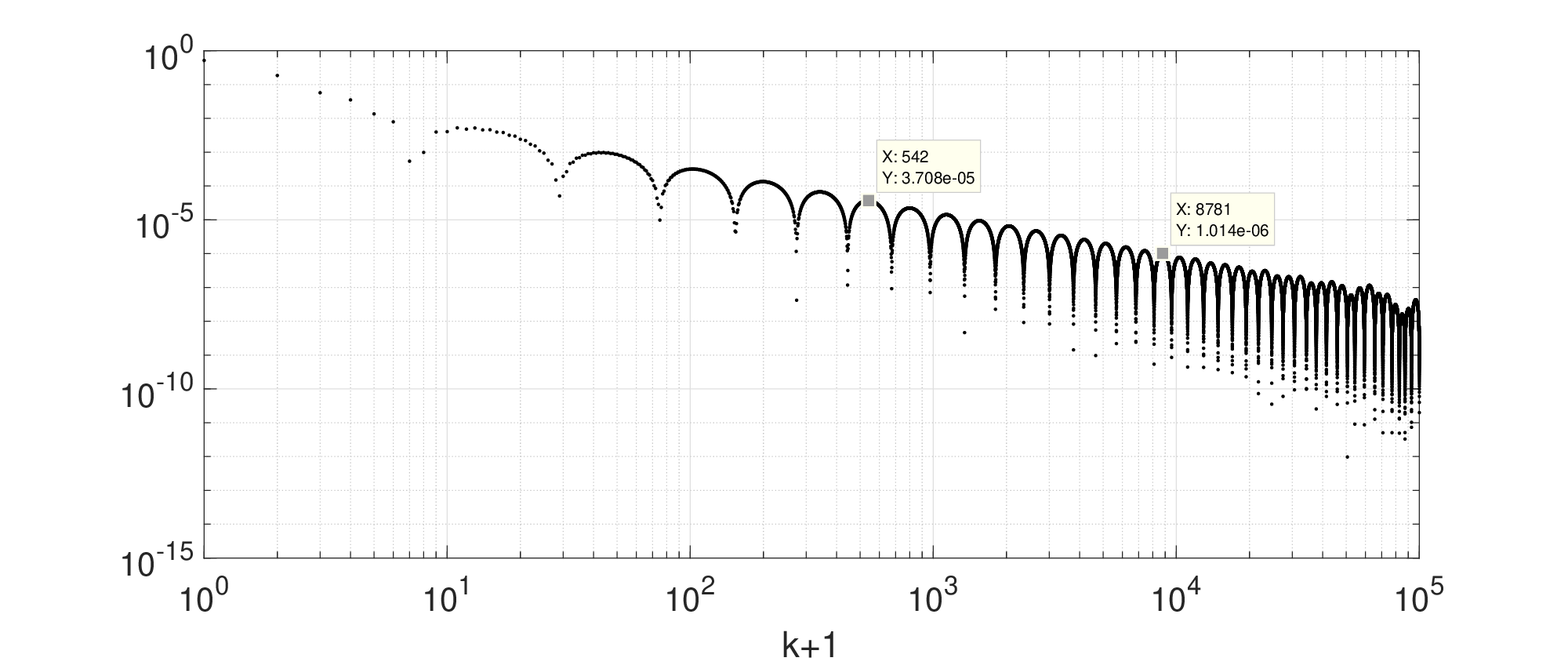}
}
\caption{Absolute values of the DFT of the samples of $|x|^{0.7}\sin\left(1/|x|^{0.5}\right)$ in figure \ref{fig:infinitely_oscillating_alpha07_beta05} from $k = 0$ to $10^5-1$ on a log-log scale}
\label{fig:infinitely_oscillating_alpha07_beta05_fourier_coeff}
\end{figure}

\end{example}

\begin{example}
In this short example we will show that in Theorem \ref{Th1} the $\Delta_h$ operator cannot be removed from the condition: "the number of local maxima and minima of $\Delta_h f^{(m)}$ is uniformly bounded for every $0 < h \leq h_0$". Let us define the chirp

\begin{equation}
f(x) = x^2 \sin(1/|x|) + 3|x|, \hspace{0.5 cm} x \in [-L,L].
\end{equation}
From the previous example we know that $f \in C_1[-L,L]$ and also $c_k(f) = O\left(1/|k|^{(1+2 + \frac{1}{2})/(1 + 1)}\right) = O\left( 1/|k|^{1.75} \right) $. Similarly as the chirps discussed in the previous example, the difference function $\Delta_h f$ has infinitely many local extrema when $h$ is small $0<h<h_0$, and thus this function does not satisfy the requirements of Theorem \ref{Th1}. But we can calculate the derivative of $f$ on $x \in [-L,L]$
\begin{equation}
f'(x) = 2x \sin(1/|x|) -\sign(x) \cdot \cos(1/|x|) + 3\sign(x),
\end{equation}
and note that it is negative on $[-L,0]$ and positive on $[0,L]$. Thus $f$ itself has only finitely many local maxima and minima.

\end{example}

\section{Application to error analysis of numerical Weyl fractional derivatives}\label{sec:6}

In this section we take a quick look at how Theorem \ref{Th1} relates to fractional calculus and its error analysis as derived by the author in \cite{Juhani_error}. The following fractional differintegral operator is a generalisation of the one studied by H. Weyl in \cite{Weyl}.

\begin{definition}\label{maar:Weyl-di} Let $c_k(f)$ be the Fourier coefficients of a $T$-periodic function or distribution $f$. Then the \textit{Weyl differintegral} operator is
\begin{equation}
^{W}D^z f(t) = \sum _{k = -\infty}^\infty \left( \frac{i2\pi k}{T} \right)^{z} c_k(f)  e^{i 2 \pi k t /T },
\end{equation}
where $(i2\pi k/T)^z = e^{z\ln(2\pi|k|/T) + iz\frac{\pi}{2} \text{sign}(k)}$ and $z = \alpha + i \beta$ for some $\alpha, \beta \in \R$.
\end{definition}

The DFT and IDFT pair can be used for differentiation and integration of signals \cite{Juhani_der}. The algorithm consists of calculating the DFT coefficients $F_k$ and then forming a new sequence $\textbf{G} = (G_0, \ldots , G_{N-1})$, with $G_0 = 0$ and
\begin{equation}\label{eq:FourierDInumerical1}
\begin{aligned}
G_k &= \left(i2\pi k/T \right)^{z} F_k, \hspace{1.0cm} 0< k < N/2 \\
G_{N+k} &= \left(i2\pi k/T \right)^{z} F_{N+k}, \hspace{0.5cm} -N/2< k < 0 \\
G_{N/2} &= \left( \frac{\pi N}{T} \right)^{z} \cos\left( z \frac{\pi}{2} \right) F_{N/2} \hspace{0.5cm} \text{(if } N \text{ is even)}.
\end{aligned}
\end{equation}
The estimate for the differintegral $\tilde{\textbf{f}}^{(z)}$ is then calculated with the IDFT
\begin{equation}\label{eq:FourierDInumerical2}
\tilde{f}_n^{(z)} = \mathcal{F}^{-1} \{\textbf{G}\}_n.
\end{equation}
The DFT coefficients (\ref{eq:FourierDInumerical1}) define a trigonometric interpolation also between the discrete points of the estimated fractional derivative as
\begin{equation}\label{eq:trig_interpolant1}
\begin{aligned}
\tilde{f}^{(z)}(t) = &\sum_{0\leq k<N/2} G_k e^{i 2 \pi k t / T} + \sum_{-N/2<k<0} G_{N+k}e^{i 2 \pi k t / T} \\
&+ \left( \frac{\pi N}{T} \right)^{z} F_{N/2} \cos\left(\frac{\pi N t}{T}  + z \frac{\pi}{2}\right),
\end{aligned}
\end{equation}
where the term $F_{N/2}$ is of course included only if $N$ is even.

In \cite{FourDiff1} the following error formula has been proved for $f \in H^s(0,T)$, $s > 1/2$ and $0\leq \sigma \leq s$
\begin{equation}\label{eq:Tadmor_error_1}
\left\Vert f(t) - \tilde{f}(t) \right\Vert_{H^\sigma} \leq \frac{E \Vert f \Vert_{H^s}}{N^{s-\sigma}}.
\end{equation}
although the term $F_{N/2}$ is neqlected in the analysis. A bit more generalised result featuring the same decay rate is also given in eqs. (14) and (15) in \cite{FourDiff3}. An example of the result (\ref{eq:Tadmor_error_1}) for $\sigma = 1$ was also given in \cite{FourDiff1}, in which case for a function $f \in H^s(0,T)$, $s > 1$, the $L^2$ error of the numerical first derivative with the Fourier method is
\begin{equation}\label{eq:Tadmor_error_2}
\left\Vert D^1 f(t) - \tilde{f}^{(1)}(t) \right\Vert_2 \leq \frac{E \Vert f \Vert_{H^s}}{N^{s-1}}.
\end{equation}
This error combines the effects of aliasing and truncation. Similar analysis is done for the Fourier transform and functions in the Sobolev space defined on the real line in \cite{FourDiff2}.

Finally E. Tadmor also stated in \cite{FourDiff1} that a Sobolev inequality implies a result on the maximum norm from (\ref{eq:Tadmor_error_2}) for the first derivative. This Sobolev inequality is the following from $W^q_1(0,T)$ to $L^p(0,T)$ for $1\leq q \leq p \leq \infty$
\begin{equation}
   \left\Vert f(t) \right\Vert_p \leq E' \left\Vert D^1 f(t) \right\Vert_q .
\end{equation}
Explicit calculations of the constant $E'$ for different domains and values of $p$ and $q$ are given in \cite{Mizuguchi2017}. For this analysis, it is enough to know that based on Theorems 2.1 and 3.4 of \cite{Mizuguchi2017} for $p = \infty$ and $q = 2$, we get $E' = O(1/|T|^{1/2}) = O(1/N^{1/2})$. Thus for $f \in H^s(0,T)$, $s > 3/2$
\begin{equation}\label{eq:Tadmor_error_3}
 \left\Vert D^1 f - \tilde{f}^{(1)} \right\Vert_\infty = \max_{t\in [0,T]} \left| D^1 f(t) - \tilde{f}^{(1)}(t) \right| \leq \frac{E'' \Vert f \Vert_{H^s}}{N^{s-3/2}}.
\end{equation}
Note that $D^1 f \in H^{s-1}(0,T)$ and another Sobolev embedding \cite[p. 185]{Serov} guarantees that it is continuous. Since $\tilde{f}^{(1)}$ is also continuous, a maximum exists and no essential supremum is needed in the $L^\infty$ norm.

The author proved the following Theorem about the $L^\infty$ error in \cite{Juhani_error}.

\begin{theorem}\label{th:Juhani_error}
If the Fourier coefficients of the periodic function $f$ decay like $c_k(f) = O(1/|k|^{1+m+\mu})$ with some $m = 0, 1, 2, \ldots$ and $\mu \in (0,1]$, then for $0 < z < m+\mu$
\begin{equation}\label{eq:Juhani_error}
\max_{t\in [0,T]} \left|^{W}D^z f(t) - \tilde{f}^{(z)}(t) \right| \leq \frac{U}{N^{m+\mu-z}},
\end{equation}
where $N$ is the number of discrete samples of the function on the interval $[0,T]$ and $U$ does not depend on $N$. This is a combination of truncation and aliasing errors.
\end{theorem}

Note that also in (\ref{eq:Juhani_error}) the derivative $^{W}D^z f$ is continuous according to Theorem \ref{th:1part2} and also $\tilde{f}^{(z)}$ is continuous. Thus there is a maximum and no essential supremum is needed.

We also note for completeness, that it is shown in \cite{FourDiff1} and \cite{Knockaert} that for analytic functions, the error estimates of the type (\ref{eq:Tadmor_error_1}) decay exponentially for the function and all of its derivatives.

\begin{example}
We can compare the two error results (\ref{eq:Tadmor_error_3}) and (\ref{eq:Juhani_error}) for a periodic function $g$ whose Fourier coefficients are
\begin{equation}
    c_k(g) = \frac{1}{1+|k|^{2+\mu}}
\end{equation}
for $\mu \in (0,1)$ and it follows from \ref{th:1part2} that $g \in C_{1,\mu}[0,T]$.
The condition for $g \in H^s(0,T)$ is
\begin{equation}
\sum_{k = -\infty}^\infty |k|^{2s} |c_k(g)|^2 = \sum_{k = -\infty}^\infty O\left(|k|^{2s-4-2\mu} \right) < \infty,
\end{equation}
from which we get $2s-4-2\mu < -1 \Rightarrow s - 3/2 < \mu$. Since we also needed to have $s - 3/2 > 0$ in order to use (\ref{eq:Tadmor_error_3}), it follows that for any $0 < \epsilon < \mu$ we get that $g \in H^{3/2+\epsilon}(0,T)$ and the following error bound

\begin{align}
\max_{t\in [0,T]}\left|^{W}D^1 f(t) - \tilde{f}^{(1)}(t) \right| &= O\big(1/N^{s-3/2}\big) \\
&= O\big(1/N^{\epsilon}\big),
\hspace{0.5 cm} 0 < \epsilon < \mu.\nonumber
\end{align}

And when using (\ref{eq:Juhani_error}), we don't need the $\epsilon$ to produce the sharper error bound
\begin{equation}
\max_{t\in [0,T]} \left|^{W}D^1 f(t) - \tilde{f}^{(1)}(t) \right| = O\big(1/N^{1+\mu-1}\big) = O\big(1/N^{\mu}\big).
\end{equation}
It is also easier to produce error bounds for arbitrary orders of differentiation using (\ref{eq:Juhani_error}).

\end{example}

\section{Extension to Fourier transforms}\label{sec:7}

Our goal now is to extend Theorem \ref{th:1part1} to non-periodic functions defined on $\R$ and their Fourier transforms. We need to consider integrability and absolute continuity on $\R$.

 \begin{definition}
The \textit{Fourier transform} of a function $f \in L^1(\R)$ is
\begin{equation}\label{eq:FourierL1}
\mathcal{F}\big\{f(t)\big\}(\nu) = \widehat{f}(\nu) = \int_{-\infty}^{\infty} f(t) e^{-i2\pi \nu t} \, \mathrm{d}t
\end{equation}
and its \textit{inverse transform} (if it exists) is
\begin{equation}
\mathcal{F}^{-1}\big\{\widehat{f}(\nu)\big\}(t) = \int_{-\infty}^{\infty} \widehat{f}(\nu) e^{i2\pi \nu t} \, \mathrm{d}\nu.
\end{equation}
 \end{definition}
 
The theory and application of Fourier transforms is simplified considerably if we extend its definition to \textit{tempered distributions} $S'(\R)$. These are linear continuous functionals from the space of \textit{Schwartz functions} $S(\R)$, which are also called \textit{rapidly decaying functions}, to $\C$. As an example of the types of generalised functions in $S'(\R)$, we remind that given any polynomial $q$ and a function $g \in L^p(\R)$ for any $1 \leq p \leq \infty$, their product $pg \in S'(\R)$.

A generalised function $f \in S'(\R)$ has generalised derivatives of all order and they all have Fourier transforms as tempered distributions.

\begin{lemma}\label{lemma:tempered_FT_derivative}
If $f \in S'(\R)$, then $f^{(m)} \in S'(\R)$ for all $m \in \N$. Also $\widehat{f} \in S'(\R)$ and
\begin{equation}
\mathcal{F}\left\{f^{(m)}\right\} = (i2\pi\nu)^m \widehat{f}.
\end{equation}
\end{lemma}

 \begin{proof} 
\cite[pp. 105 -- 106, 185 -- 186]{Zemanian}.
 \end{proof} 

 \begin{definition}\label{Def:boundedvariation_R}
Function $f: \R \rightarrow \C$ is of \textit{bounded variation}, i.e. $f \in BV(\R$), if its \textit{total variation} is finite, i.e. $V_{-\infty}^\infty(f) < \infty$.
 \end{definition}

 \begin{definition}\label{Def:absolutecontinuity_R}
Function $f:$ $\R \rightarrow \C$ is \textit{absolutely continuous}, i.e. $f \in AC(\R)$, if its derivative $f'$ exists a.e. on $\R$ and is Lebesgue integrable
\begin{equation}\label{eq:absolutecontinuity_R}
\int_{-\infty}^t f'(\tau) \, \mathrm{d}\tau = f(t) - C, \hspace{1cm} \text{for every } t \in \R.
\end{equation}

Note that necessarily $C = \lim_{t\rightarrow -\infty} f(t)$ in (\ref{eq:absolutecontinuity_R}). This definition is almost equal to \textit{Sobolev space} $W_1^1(\R)$, which means that the function and its first generalised derivative are in $L_1(\R)$ and that (\ref{eq:absolutecontinuity_R}) applies. Note that the two requirements $f, f' \in L_1(\R)$ of the space $W_1^1(\R)$ guarantee that $\lim_{|t|\rightarrow \infty} f(t) = 0$. We will not need the integrability of $f$ in the Theorem that follows and thus we first state it without Sobolev spaces.
 \end{definition}

The decay rates of Fourier transforms of functions $f \in BV(\R)$ and $f \in AC(\R)$ behave similarly as in the periodic case.

\begin{lemma}\label{lemma:BV_decay_R}
If $f \in BV(\R)$, then $\widehat{f} = O\left(\frac{1}{\nu} \right)$.
\end{lemma}

 \begin{proof} 
Proved using generalised derivatives in \cite[pp. 33 -- 34]{Mallat}.
 \end{proof} 
 
 \begin{lemma}\label{lemma:AC_decay_R}
If $f \in AC(\R)$, then $\widehat{f} = o\left(\frac{1}{\nu} \right)$.
\end{lemma}

 \begin{proof} 
Since $f' \in L^1(\R)$, we know that $f' \in S'(\R)$. A tempered distribution is always a finite order derivative of some continuous function of power growth at the infinities \cite[p. 115]{GelfandShilov2}. Moreover, all the primitives of a tempered distribution are also temperate \cite[p. 108]{Zemanian}, and thus we know that $f$ also has a Fourier transform $\widehat{f} \in S'(\R)$.

Now, according to Lemma \ref{lemma:tempered_FT_derivative}, we get $\widehat{f'} = i2\pi\nu \widehat{f}$. The Riemann-Lebesgue Lemma \cite[p. 94]{Pinsky} also states that $\widehat{f'} = o(1) $, and thus $\widehat{f} = \widehat{f'}/(i2\pi\nu) = o\left(\frac{1}{\nu} \right)$.
 \end{proof} 

 \begin{definition}\label{Def:holdercontinuity_R}
Function $f:$ $\R \rightarrow \C$ is \textit{uniformly H\"older continuous} of order $\mu = (0,1]$, if
\begin{equation}
|f(t + h) - f(t)| \leq C h^\mu,
\end{equation}
holds for all $t$, $t+h \in \R$ and $0 < h \leq h_0$. Then we write $f \in C_\mu(\R)$. If for some $m = 1, 2, 3, \ldots$ it holds that $f^{(m)} \in C_\mu(\R)$, we write $f \in C_{m,\,\mu}(\R)$.
 \end{definition}
 
 \begin{example}\label{ex:AC(R)_example}
Let $\mu \in (0,1]$ . Consider the function
\begin{equation}\label{eq:AC(R)_example_function}
\frac{1}{1+|x|^\mu}.
\end{equation}
Since $|x|^\mu \in C_\mu(\R)$, it follows from the results about H\"older continuity of sums and quotients of H\"older continuous functions in \cite[pp. 14 -- 15]{Fiorenza} that $1/(1+|x|^\mu) \in C_\mu(\R)$. Its derivative is
 \begin{equation}\label{eq:AC(R)_example_function_derivative}
\frac{-\mu \sign(x) |x|^{\mu - 1}}{(1+|x|^\mu)^2} = O(1/|x|^{\mu+1}) \hspace{0.5cm} \text{as } |x|\rightarrow \infty.
\end{equation}
Thus, for every $\mu \in (0,1]$ the function (\ref{eq:AC(R)_example_function}) is not in $L^1(\R)$, but is in $AC(\R)$ because its derivative (\ref{eq:AC(R)_example_function_derivative}) is in $L^1(\R)$ and (\ref{eq:absolutecontinuity_R}) holds.

Note especially that (\ref{eq:AC(R)_example_function_derivative}) is $O(1/|x|^{1-\mu})$ as $|x|\rightarrow 0$, and thus the singularity at 0 is also integrable.
 \end{example} 

 \begin{definition}\label{Def:Lp_R_holdercontinuity}
Function $f \in L^p(\R)$ is $L^p$ H\"older continuous of order $\mu = (0,1]$, if for all $0 < h \leq h_0$
\begin{equation}
\Vert f_h - f \Vert_p = \left( \int_{-\infty}^{\infty} |f(t+h) - f(t)|^p \mathrm{d}t \right)^{1/p} \leq C h^\mu,
\end{equation}
and we write $f \in C_\mu^p(\R)$. If $f^{(m)} \in C_\mu^p(\R)$, we write $f \in C_{m,\,\mu}^p(\R)$.
 \end{definition}

\begin{lemma}\label{lemma:L1_Holder_R}
If $f \in C_{m,\,\mu}^1(\R)$ and $f^{(m-1)} \in AC(\R)$ (when $m > 0$), then $\widehat{f} = O\left(\frac{1}{|\nu|^{m + \mu}} \right)$.
\end{lemma}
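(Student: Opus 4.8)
The plan is to mirror the periodic argument of Theorem \ref{th:Zygmund_Hölder}, replacing Fourier coefficients by the Fourier transform and the Sobolev bookkeeping by integration by parts on $\R$. I would first dispose of the base case $m=0$, where the hypothesis reduces to $f \in C_\mu^1(\R)$; by Definition \ref{Def:Lp_R_holdercontinuity} this already includes $f \in L^1(\R)$, so $\widehat{f}$ is well defined. The engine is the classical translation trick: for $\nu>0$ the substitution $t \mapsto t+h$ with $h=1/(2\nu)$ multiplies the kernel by $e^{-i2\pi\nu h}=e^{-i\pi}=-1$, giving $\widehat{f}(\nu) = -\int_{\R} f(t+h)\,e^{-i2\pi\nu t}\,\mathrm{d}t$. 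Averaging this with the defining integral yields
\[
2\widehat{f}(\nu) = \int_{\R}\bigl(f(t)-f(t+h)\bigr)e^{-i2\pi\nu t}\,\mathrm{d}t,
\]
whence $|\widehat{f}(\nu)| \leq \tfrac{1}{2}\Vert f - f_h \Vert_1 \leq \tfrac{C}{2}\,h^\mu = \tfrac{C}{2}\,(2|\nu|)^{-\mu}$, which is exactly $O(1/|\nu|^\mu)$. The same computation with $h=1/(2|\nu|)$ covers $\nu<0$, and since we only care about $|\nu|\to\infty$ the requirement $0<h\leq h_0$ is eventually met.

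For $m>0$ I would reduce to the base case through the differentiation rule $\widehat{f^{(m)}}(\nu) = (i2\pi\nu)^m\,\widehat{f}(\nu)$. The hypothesis $f^{(m-1)}\in AC(\R)$, together with the cascade that each lower-order derivative is then continuously differentiable and hence absolutely continuous, allows $m$ successive integrations by parts. Because $f\in C_{m,\,\mu}^1(\R)$ means precisely $f^{(m)}\in C_\mu^1(\R)$, the base case applied to $f^{(m)}$ delivers $\widehat{f^{(m)}}(\nu)\in O(1/|\nu|^\mu)$; dividing by $|2\pi\nu|^m$ then gives $\widehat{f}(\nu)\in O(1/|\nu|^{m+\mu})$, as claimed. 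This is the direct real-line analogue of the identity $c_k(f^{(m)})=(i2\pi k/T)^m c_k(f)$ invoked in Theorem \ref{th:Zygmund_Hölder}.

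The main obstacle is justifying that every boundary term in the integrations by parts vanishes at $\pm\infty$, since on the unbounded domain there is no periodicity to make them cancel. I expect to handle this by invoking the decay property recorded after Definition \ref{Def:absolutecontinuity_R}, namely that membership in $W_1^1(\R)$ (equivalently the $AC(\R)$ condition with integrable derivative) forces $\lim_{|t|\to\infty}f^{(j)}(t)=0$. The delicate bookkeeping is to confirm that each intermediate derivative $f^{(j)}$, $0\leq j\leq m-1$, is integrable so that this vanishing applies: one argues downward from $f^{(m)}\in L^1(\R)$, using that an absolutely continuous function whose derivative lies in $L^1(\R)$ tends to finite limits at $\pm\infty$, and that integrability pins those limits to zero. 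Once the multiplier identity is secured on these grounds, the final estimate follows immediately from the $m=0$ bound.
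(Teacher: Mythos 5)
Your base case $m=0$ reproduces the paper's proof exactly: the same half-period shift $h = 1/(2\nu)$, the same averaging identity, and the same bound $\big|\widehat{f}(\nu)\big| \leq \frac{1}{2}\Vert f_h - f\Vert_1 \leq \frac{C}{2}(2|\nu|)^{-\mu}$. The gap is in your reduction for $m>0$. You propose to argue ``downward from $f^{(m)}\in L^1(\R)$'' that each intermediate derivative $f^{(j)}$, $0 \leq j \leq m-1$, is integrable and vanishes at $\pm\infty$, but the hypotheses do not supply this, and the step where ``integrability pins those limits to zero'' is circular: the integrability of $f^{(m-1)}$ is exactly what you would need to establish. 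By Definition \ref{Def:Lp_R_holdercontinuity}, $f \in C_{m,\,\mu}^1(\R)$ places only $f^{(m)}$ in $L^1(\R)$, and Definition \ref{Def:absolutecontinuity_R} forces $f^{(m-1)}$ to have \emph{finite} limits at $\pm\infty$, not zero limits. Concretely, take $f^{(m)}(t) = 1/(1+t^2)$: it lies in $C_1^1(\R)$ since $\Vert f^{(m)}_h - f^{(m)}\Vert_1 \leq h\,\Vert f^{(m+1)}\Vert_1$, yet $f^{(m-1)}(t) = \arctan(t) + c$ has nonzero limits and is not integrable, and the lower primitives grow polynomially. None of the transforms $\widehat{f^{(j)}}$, $j<m$, exists as a Lebesgue integral, the boundary terms in your integrations by parts do not vanish, and $\widehat{f}$ itself exists only as a tempered distribution, so the classical integration-by-parts cascade cannot be ``secured on these grounds.''

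This is precisely why the paper routes the case $m>0$ through distribution theory instead of integration by parts: since $f^{(m)} \in L^1(\R)$ it is a tempered distribution, every primitive of a tempered distribution is again tempered (the paper cites Zemanian and Gel'fand--Shilov for this), hence $\widehat{f}$ exists in the distributional sense and the multiplier identity $\mathcal{F}\big\{f^{(m)}\big\}(\nu) = (i2\pi\nu)^m\,\widehat{f}(\nu)$ holds there; dividing by $(i2\pi\nu)^m$ away from the origin then yields $\widehat{f} \in O(1/|\nu|^{m+\mu})$. Note also the paper's remark before Theorem \ref{th:2part1} that integrability of $f$ is deliberately \emph{not} assumed. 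Your argument does become correct under the stronger hypothesis $f^{(j)} \in L^1(\R)$ for all $0 \leq j \leq m$ (e.g. $f \in W_{m+1}^1(\R)$, as in Corollary \ref{th:2corollary}), since an integrable absolutely continuous function with integrable derivative does vanish at $\pm\infty$; but as stated the lemma is broader, and the distributional detour is what carries it.
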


 \begin{proof} 
Suppose that $m = 0$. Let us look at
\begin{equation}
\int_{-\infty}^{\infty} f\big(t+1/(2\nu)\big) e^{-i2\pi \nu t} \mathrm{d}t = e^{i\pi} \int_{-\infty}^{\infty} f(\tau) e^{-i2\pi \nu \tau} \mathrm{d}\tau = - \widehat{f}(\nu),
\end{equation}
and thus
\begin{equation}
-2 \widehat{f}(\nu) = \int_{-\infty}^{\infty} \Big( f\big(t+1/(2\nu)\big) - f(t) \Big) e^{-i2\pi \nu t} \mathrm{d}t.
\end{equation}
Now we can bound the Fourier transform
\begin{equation}
\big\vert\widehat{f}(\nu)\big\vert  \leq \frac{1}{2} \int_{-\infty}^{\infty} \Big| f\big(t+1/(2\nu)\big) - f(t) \Big| \mathrm{d}t,
\end{equation}
and if $f \in C_{\mu}^1(\R)$, then
\begin{equation}
\big\vert\widehat{f}(\nu)\big\vert  \leq \frac{C}{2} \left( \frac{1}{2\nu} \right)^\mu.
\end{equation}
If $m > 0$, then we have $\mathcal{F}\left\{f^{(m)}\right\}(\nu) = O(1/|\nu|^\mu)$. Since $f^{(m)} \in L^1(\R)$, we know that $f^{(m)} \in S'(\R)$. A tempered distribution is always a finite order derivative of some continuous function of power growth at the infinities \cite[p. 115]{GelfandShilov2} and all the primitives of a tempered distribution are also temperate \cite[p. 108]{Zemanian}. Thus we know that $f$ also has a Fourier transform $\widehat{f} \in S'(\R)$. Then according the Lemma \ref{lemma:tempered_FT_derivative} we know that $\mathcal{F}\left\{f^{(m)}\right\} = (i2\pi \nu)^m \, \widehat{f}$ and thus $\widehat{f} = O(1/|\nu|^{m+\mu})$.
 \end{proof} 
 
  \begin{remark}
The corresponding Theorem \ref{th:Zygmund_Holder} for Fourier series is valid for all $p \geq 1$ and H\"older's inequality is used to prove this in \cite[pp. 34 -- 35]{Serov}. Using the same inequality here would result in a divergent integral and thus on $\R$ we can only state this result for $p = 1$.
  \end{remark}

 \begin{theorem} \label{th:2part1}
Suppose that $f^{(m)} \in AC(\R)$, $f^{(m)}(t) \rightarrow 0$ as $|t| \rightarrow \infty$ and the number of maxima and minima of $\Delta_h f^{(m)}$ is uniformly bounded for every $0 < h \leq h_0$. Suppose also that $f \in C_{m,\,\mu}[a-h_0,b+h_0]$ with some $\mu \in (0,1]$ and $[a,b]$ contains all the mentioned local extrema of $\Delta_h f^{(m)}$ for all $0 < h \leq h_0$. Then $\widehat{f}(\nu) = O(1/|\nu|^{1+m+\mu})$ as $|\nu| \rightarrow \infty$.
 \end{theorem}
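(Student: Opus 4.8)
The plan is to mirror the proof of Theorem \ref{th:1part1}, reducing the claim to showing that $f \in C^1_{m+1,\,\mu}(\R)$, after which Lemma \ref{lemma:L1_Holder_R} applied with the index $m+1$ (its hypothesis $f^{(m)} \in AC(\R)$ being exactly one of our assumptions, and $m+1>0$ always) immediately gives $\hat{f}(\nu) \in O(1/|\nu|^{1+m+\mu})$. Thus the entire task is to establish the $L^1$ H\"older estimate $\Vert \Delta_h f^{(m+1)} \Vert_1 \leq C' h^\mu$ uniformly for $0 < h \leq h_0$, which is what $f^{(m+1)} \in C^1_\mu(\R)$ means.

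First I would set $g = \Delta_h f^{(m)} = f^{(m)}_h - f^{(m)}$. Since translation preserves $AC(\R)$ and a difference of $AC(\R)$ functions is again $AC(\R)$ (the $\R$-analogue of Lemma \ref{lemma:ACsum}), we have $g \in AC(\R)$ with $g' = \Delta_h f^{(m+1)} \in L^1(\R)$, so that $\Vert \Delta_h f^{(m+1)} \Vert_1 = \int_{-\infty}^\infty |g'(t)|\,\mathrm{d}t = V_{-\infty}^\infty(g)$. Following the shorter of the two proofs of Theorem \ref{th:1part1}, I would partition $\R$ at the finitely many local extrema of $g$, all of which lie in $[a,b]$ by hypothesis. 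On each interval between consecutive extrema $g$ is monotone, so $\int |g'| = |g(t_k) - g(t_{k-1})|$ there, and each such increment satisfies
\[
|g(t_k) - g(t_{k-1})| \leq |f^{(m)}(t_k+h) - f^{(m)}(t_k)| + |f^{(m)}(t_{k-1}+h) - f^{(m)}(t_{k-1})| \leq 2Ch^\mu,
\]
using that $f \in C_{m,\,\mu}[a-h_0,b+h_0]$ and that the points $t_k$ and $t_k+h$ all lie in this interval. With $L$ the uniform bound on the number of monotonicity intervals, the contribution of the middle part is at most $2LCh^\mu$.

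The hard part, and the essential difference from the periodic Theorem \ref{th:1part1} where periodicity closed the estimate on a bounded interval, will be the two infinite tails. Since $[a,b]$ contains every extremum of $g$, the function $g$ is monotone on $(-\infty, a]$ and on $[b, \infty)$; and since $f^{(m)}(t) \to 0$ as $|t| \to \infty$, we also get $g(t) = f^{(m)}(t+h) - f^{(m)}(t) \to 0$ at both infinities. Hence $\int_{-\infty}^a |g'| = |g(a)|$ and $\int_b^\infty |g'| = |g(b)|$, each being a single H\"older increment bounded by $Ch^\mu$. Summing the three pieces yields $V_{-\infty}^\infty(g) \leq 2C(1+L)h^\mu$, i.e. $\Vert \Delta_h f^{(m+1)}\Vert_1 \leq 2C(1+L)h^\mu$, so $f^{(m+1)} \in C^1_\mu(\R)$ and therefore $f \in C^1_{m+1,\,\mu}(\R)$; invoking Lemma \ref{lemma:L1_Holder_R} as in the first paragraph then finishes the proof. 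The two points I would check most carefully are that the finite-oscillations hypothesis is genuinely uniform in $h$, so that $L < \infty$, and that the decay hypothesis indeed forces the tail variations to be finite and to reduce to the boundary values $|g(a)|$ and $|g(b)|$ — both of which are precisely what the stated assumptions guarantee.
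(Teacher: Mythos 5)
Your proof is correct and follows essentially the same route as the paper's: you partition $\R$ at the finitely many extrema of $g = \Delta_h f^{(m)}$, use the decay $f^{(m)}(t) \rightarrow 0$ as $|t| \rightarrow \infty$ to reduce each unbounded tail to a single H\"older increment, establish $\Vert \Delta_h f^{(m+1)} \Vert_1 \leq C' h^\mu$, and conclude via Lemma \ref{lemma:L1_Holder_R}. Your explicit three-piece split (middle plus two tails) is merely more detailed bookkeeping of the paper's partition, which takes $-\infty$ and $\infty$ as its first and last partition points and bounds $|g(t_1)-g(-\infty)|$ and $|g(\infty)-g(t_{M-1})|$ in exactly the way you bound $|g(a)|$ and $|g(b)|$.
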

 
 \begin{proof} 
Suppose that $m = 0$. Let us partition $\R$ so that the partition points are the local minima and maxima of $g = \Delta_h f$ and the first point is $-\infty$ and the last point $\infty$. Then the derivative $g'$ has a constant sign in any of these intervals and for any $0 < h \leq h_0$ we have $g(t) \rightarrow 0$ as $|t| \rightarrow \infty$. Thus, we can evaluate
\begin{align}
\Vert f'_h - f' \Vert_1 &= \int_{-\infty}^\infty |g'(t)| \, \mathrm{d}t = \sum_{k = 1}^M \int_{t_{k-1}}^{t_{k}} | g'(t)| \, \mathrm{d}t = \sum_{k=1}^M \big| g(t_k) - g(t_{k-1}) \big| \nonumber\\
&= |g(t_1) - g(-\infty)| + |g(\infty) - g(t_{M-1})| + \sum_{k=2}^{M-1} \big| g(t_k) - g(t_{k-1}) \big|,
\end{align}
and estimate
\begin{align}
\Vert f'_h - f' \Vert_1 &\leq \big| f(t_1 + h) - f(t_1) \big| + \big| f(t_{M-1} + h) - f(t_{M-1}) \big| \nonumber\\
&\enspace\enspace + \sum_{k=2}^{M-1}  \Big(\big| f(t_k + h) - f(t_k) \big| + \big| f(t_{k-1} + h) - f(t_{k-1}) \big| \Big) \nonumber\\
&\leq 2(M-1)C|h|^\mu \leq 2(L-1)C |h|^\mu,
\end{align}
where $L$ is the supremum of the number of increasing and decreasing intervals of $\Delta_h f$ over all $0 < h \leq h_0$.

Thus $f' \in C^1_\mu(\R)$ and because $f \in AC(\R)$ as well, it follows from Lemma \ref{lemma:L1_Holder_R} that $\mathcal{F}\{f\}(\nu) = O(1/|\nu|^{1+\mu})$. The cases $m >0$ are proved identically.
 \end{proof} 
 
 \begin{remark}
If we simplify the conditions of the previous Theorem slightly, a more elegant result follows (this is Theorem \ref{Th2}).
 \end{remark}
 
 \begin{corollary} \label{th:2corollary}
Suppose that $f \in W_{m+1}^1(\R) \cap C_{m,\,\mu}(\R)$ and the number of local extrema of $\Delta_h f^{(m)}$ is uniformly bounded for all $0 < h \leq h_0$. Then $\widehat{f}(\nu) = O(1/|\nu|^{1+m+\mu})$  as $|\nu| \rightarrow \infty$.
 \end{corollary}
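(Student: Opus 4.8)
The plan is to derive the corollary from the proof of Theorem \ref{th:2part1}, by checking that membership in $W_{m+1}^1(\R)$ supplies the regularity and decay hypotheses of that theorem, while the global Hölder assumption $f \in C_{m,\,\mu}(\R)$ makes the localisation to a bounded interval $[a,b]$ unnecessary. First I would unpack the Sobolev hypothesis. By the $\R$-analogue of Definition \ref{Def:absolutecontinuity}, $f \in W_{m+1}^1(\R)$ means $f^{(m)} \in AC(\R)$ and $f^{(m)}, f^{(m+1)} \in L^1(\R)$. As noted after Definition \ref{Def:absolutecontinuity_R}, the pair $f^{(m)}, f^{(m+1)} \in L^1(\R)$ forces $f^{(m)}(t) \to 0$ as $|t| \to \infty$. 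These are exactly the absolute-continuity and decay assumptions used in Theorem \ref{th:2part1}.

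Next I would re-run the partition argument from that proof with $g = \Delta_h f^{(m)}$. Since the number of maxima and minima of $g$ is bounded by some $L$ uniformly for $0 < h \le h_0$, I can partition $\R$ at these (at most $L$) extrema together with the endpoints $\pm\infty$; on each resulting subinterval $g'$ has constant sign, so $\int |g'| = |g(t_k) - g(t_{k-1})|$ and the signed integrals telescope. Because $f^{(m)} \in AC(\R)$, one has $g' = f^{(m+1)}_h - f^{(m+1)}$ almost everywhere, so $\int_{-\infty}^\infty |g'| = \Vert f^{(m+1)}_h - f^{(m+1)} \Vert_1$. The boundary contributions at $\pm\infty$ vanish, since $g(t) = \Delta_h f^{(m)}(t) \to 0$ as $|t| \to \infty$ by the decay just established.

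The one genuinely new point — and where the corollary departs from Theorem \ref{th:2part1} — is the bound on each telescoped difference. Writing $|g(t_k) - g(t_{k-1})| \le |f^{(m)}(t_k + h) - f^{(m)}(t_k)| + |f^{(m)}(t_{k-1}+h) - f^{(m)}(t_{k-1})|$ and invoking the \emph{global} estimate $f \in C_{m,\,\mu}(\R)$, each term is at most $C h^\mu$ regardless of where the partition points lie. This is precisely why I no longer need the requirement that all extrema be contained in a fixed bounded interval $[a,b]$: global Hölder continuity delivers the estimate everywhere. Summing over the at most $L$ intervals gives $\Vert f^{(m+1)}_h - f^{(m+1)} \Vert_1 \le 2(L-1) C h^\mu$, so $f^{(m+1)} \in C_\mu^1(\R)$, i.e. $f \in C_{m+1,\,\mu}^1(\R)$. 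Applying Lemma \ref{lemma:L1_Holder_R} with index $m+1$ (whose $f^{(m)} \in AC(\R)$ hypothesis we have verified) then yields $\hat{f}(\nu) \in O(1/|\nu|^{(m+1)+\mu}) = O(1/|\nu|^{1+m+\mu})$.

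I do not expect a deep obstacle, but two bookkeeping points deserve care. The corollary is \emph{not} a literal special case of Theorem \ref{th:2part1}, since the uniform bound on the number of extrema does not confine their locations to a bounded interval; the substantive observation is that replacing the local Hölder condition by a global one lets the telescoping estimate go through uniformly in $h$. The second point is the handling of the two boundary terms at $\pm\infty$ in the telescoped sum, which is controlled by the $L^1$-decay of $f^{(m)}$ coming from the Sobolev hypothesis rather than from any ad hoc assumption. Both are routine once the decay $f^{(m)}(t) \to 0$ is in hand.
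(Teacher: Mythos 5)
Your proof is correct and follows exactly the route the paper intends: the corollary is stated without its own proof as a relaxation of Theorem \ref{th:2part1}, and your derivation---unpacking $f \in W_{m+1}^1(\R)$ into $f^{(m)} \in AC(\R)$ together with $f^{(m)}, f^{(m+1)} \in L^1(\R)$ and hence $f^{(m)}(t) \to 0$, re-running the theorem's partition and telescoping argument with the global H\"older bound in place of the local one, and finishing via Lemma \ref{lemma:L1_Holder_R} with index $m+1$---is precisely the intended argument. Your observation that the corollary is not a literal special case of the theorem's \emph{statement} (since the extrema of $\Delta_h f^{(m)}$ need not remain in a fixed bounded interval as $h$ varies, so it is the theorem's proof that must be reused) is accurate and a worthwhile clarification.
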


 \begin{example}
Based on the observations made in Example \ref{ex:AC(R)_example} it is clear that the function (\ref{eq:AC(R)_example_function}) satisfies the requirements of Theorem \ref{th:2part1} (but not of Corollary \ref{th:2corollary}). It follows that for $\mu \in (0,1]$
\begin{equation}
\mathcal{F}\left\{ \frac{1}{1+|t|^\mu} \right\}(\nu) = O(1/|\nu|^{1+\mu}), \hspace{0.5 cm} \text{as } |\nu| \rightarrow \infty.
\end{equation}

 \end{example}

 \begin{example}

The function $e^{-a|t|}$, $\re(a) >0$ has the Fourier transform
\begin{equation}
\mathcal{F}\left\{ e^{-a|t|} \right\}(\nu) = \frac{2a}{a^2 + (2\pi \nu)^2}.
\end{equation}
Interestingly, this decay rate is explained by two different results. Since the derivative of this function is in $BV(\R)$, Lemma \ref{lemma:BV_decay_R} can be applied. But since the function is Lipschitz at 0, smooth elsewhere and its difference function does not have infinitely many extrema, Corollary \ref{th:2corollary} applies as well.

 \end{example}

\begin{lemma}\label{lemma:convolution_decay}
Let $f_1 \in L^1(\R)$, $f_2 \in L^1(\R)$, and also $f_1 = O(g_1)$ and $f_2 = O(g_2)$ as $|x| \rightarrow \infty$, where $g_1$ and $g_2$ are decreasing functions.
Then the decay of the convolution of $f_1$ and $f_2$ is bounded by
\begin{equation}
    (f_1 \ast f_2) (x) = O(g_1(x)) + O(g_2(x)), \hspace{0.5 cm} \text{as } |x| \rightarrow \infty.
\end{equation}
\end{lemma}

\begin{proof}
This was proved for functions with polynomial decay in \cite{MathStackEx2} and \cite{MathStackEx3}, so we generalise the proof here.
\begin{align}
|(f_1 \ast f_2) (t)| &= \left| \int_{-\infty}^\infty f_1(\tau) f_2(t-\tau) \mathrm{d}\tau \right| \leq \int_{-\infty}^\infty \left| f_1(\tau) f_2(t-\tau) \right| \mathrm{d}\tau \nonumber\\
&= \int_{|\tau|>0.5|t|} \left| f_1(\tau) f_2(t-\tau) \right| \mathrm{d}\tau + \int_{|\tau|<0.5|t|} \left| f_1(\tau) f_2(t-\tau) \right| \mathrm{d}\tau \nonumber\\
&\leq \int_{|\tau|>0.5|t|} C_1 g_1(\tau) |f_2(t-\tau)| \mathrm{d}\tau + \int_{|\tau|<0.5|t|} |f_1(\tau)| C_2 g_2(t-\tau) \mathrm{d}\tau.
\end{align}
And when $|\tau|<0.5|t|$, we have $|t-\tau| \geq |t| - |\tau| \geq |t| - 0.5|t| = 0.5|t|$
\begin{align}
|(f_1 \ast f_2) (t)|
&\leq C_1 g_1(t) \int_{-\infty}^\infty  |f_2(t-\tau)| \mathrm{d}\tau + C_2 g_2(t) \int_{-\infty}^\infty |f_1(\tau)| \mathrm{d}\tau \nonumber\\
&= C_1 g_1(t) \left\Vert f_2 \right\Vert_1 + C_2 g_2(t) \left\Vert f_1 \right\Vert_1, \hspace{0.5 cm}  \text{as } |t| \rightarrow \infty.
\end{align}
\end{proof}

 \begin{example}
For $\mu \in (0,1]$, the function $e^{-\pi t^2}|t|^\mu$ satisfies the requirements of Corollary \ref{th:2corollary}. Thus, we know that $\mathcal{F}\left\{ e^{-\pi t^2}|t|^\mu \right\}(\nu) = O(1/|\nu|^{1+\mu})$ as $|\nu| \rightarrow \infty$. The convolution Theorem for rapidly decaying Schwartz functions $S(\R)$ and tempered distributions \cite[pp. 170 -- 171]{Serov} can be used to decompose this (since $e^{-\pi t^2} \in S(\R)$ and $|t|^\mu \in S'(\R)$ ) when $\mu \in (0,1)$
\begin{align}
\mathcal{F}\left\{ e^{-\pi t^2}|t|^\mu \right\}(\nu) &= \left( \mathcal{F}\left\{ e^{-\pi t^2}\right\}\ast\mathcal{F}\left\{|t|^\mu \right\} \right) (\nu) \nonumber\\
&= e^{-\pi \nu^2} \ast \frac{-2\sin\left( \frac{\pi\mu}{2}\right)\Gamma(\mu +1)}{|2\pi\nu|^{1+\mu}},
\end{align}
where the generalised function $\mathcal{F}\left\{|t|^\mu \right\}(\nu)$ is defined via its Hadamard finite part integral. Therefore, even though the Fourier transforms are known, the latter is not in $L_1(\R)$ and Lemma \ref{lemma:convolution_decay} could not be directly used to prove that the decay rate of $\mathcal{F}\left\{ e^{-\pi t^2}|t|^\mu \right\}(\nu)$ is governed by the decay rate of $\mathcal{F}\left\{|t|^\mu \right\}(\nu)$. The formula for $\mathcal{F}\left\{|t|^\mu \right\}$ and other similar ones is proved, for example, in \cite[pp. 170 -- 173]{GelfandShilov} or in Mark Viola's answer from 2021 in \cite{MathStackEx4}. Since the distributional Fourier transform $\mathcal{F}\left\{|t|^\mu \right\}$ also decays like $O(1/|\nu|^{1+\mu})$, it is likely that a generalisation of Theorem \ref{th:2part1} to some tempered distributions with enough smoothness is possible.

 \end{example}



\section*{Acknowledgments}

Most of this work was completed while the author was doing research under a grant from the Finnish Cultural Foundation, North Ostrobothnia Regional Fund. This work has also been supported by the Tauno T\"onning Foundation, Walter Ahlstr\"om Foundation, Auramo Foundation, Otto A. Malm Foundation and Riitta and Jorma J. Takanen Foundation.

I wish to thank Jukka Kemppainen for his comments on an early version of this manuscript. I also thank professor Valery Serov for his excellent courses on the theory of Fourier series and the Fourier transform organised at the University of Oulu.

\begin{figure}[h!]
\centerline{
\includegraphics[scale=1]{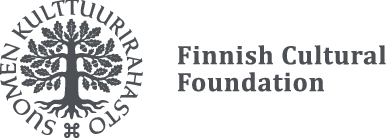}
}
\end{figure}

All web addresses retrieved on 18.03.2026.



\begin{thebibliography}{99}
 \normalsize

\bibitem{Kolmogorov}
A.N. Kolmogorov and S.V. Fomin, \textit{Introductory real analysis}, Revised English ed. (Silverman, R.A., trans.) NY: Dover, 1975.

\bibitem{HolderExp}
F. Chamizo, I. Petrykiewicz and S. Ruiz-Cabello, “The H\"older exponent of some Fourier series," \textit{J. Fourier Anal. Appl.}, vol. 23, pp. 758--777, 2017. DOI: \href{https://doi.org/10.1007/s00041-016-9488-4}{10.1007/s00041-016-9488-4}.

\bibitem{Stirlings}
C.L. Frenzen, “A New Elementary Proof of Stirling’s Formula," \textit{Math. Mag.}, vol. 68, pp. 55-–58, 1995. DOI:\\
\url{https://doi.org/10.1080/0025570X.1995.11996277}

\bibitem{SpecialFunctions}
N.N. Lebedev, \textit{Special functions and their applications}, Revised English ed. (Silverman, R.A., trans.) NY: Dover, 1972.

\bibitem{TheDFT}
W.L. Briggs and V.M. Henson, \textit{The DFT: an owner's manual for the discrete Fourier transform}, Philadelphia: SIAM, 1995.




\bibitem{FiniteDifferences}
M.R. Spiegel, \textit{Schaum's Outline of Theory and Problems of Calculus of Finite Differences and Difference Equations}, NY: McGraw-Hill Inc., 1971.

\bibitem{Pinsky}
M.A. Pinsky, \textit{Introduction to Fourier Analysis and Wavelets}, Pacific Grove: Brooks/Cole, 2002.

\bibitem{Serov}
V. Serov, \textit{Fourier Series, Fourier Transform and Their Applications to Mathematical Physics}, Springer Applied Mathematical Sciences vol. 197, 2017.

\bibitem{Zygmund1}
A. Zygmund, \textit{Trigonometric Series}, vol. 1, 2nd ed. Cambridge: Cambridge University Press, 1968.

\bibitem{Hardy}
G.H. Hardy, “Weierstrass's non-differentiable function," \textit{Trans. Amer. Math. Soc.}, vol. 17, pp. 301--325, 1916. DOI: \url{https://doi.org/10.1090/S0002-9947-1916-1501044-1}

\bibitem{Taibleson}
M. Taibleson, “Fourier coefficients of functions of bounded variation," \textit{Proc. Amer. Math. Soc.}, vol. 18, p. 766, 1967. DOI: \url{https://doi.org/10.1090/S0002-9939-1967-0212477-6}

\bibitem{Becker_error_FFT}
R.I. Becker and N. Morrison, “The Errors in FFT Estimation of the Fourier Transform," \textit{IEEE Trans. Signal Process.}, vol. 44, no. 8, pp. 2073--2077, 1996. DOI:
\href{https://doi.org/10.1109/78.533728}{10.1109/78.533728}.

\bibitem{Fiorenza}
R. Fiorenza, \textit{H\"older and locally H\"older continuous functions, and open sets of class} $C^k, C^{k,\lambda}$, Cham: Birkh\"auser, 2016.

\bibitem{Littlewood}
J.E. Littlewood, “On the Fourier coefficients of functions of bounded variation," \textit{Q. J. Math.}, vol. 7, no. 1, pp. 219--226, 1936. DOI: \href{https://doi.org/10.1093/qmath/os-7.1.219}{10.1093/qmath/os-7.1.219}.

\bibitem{WienerWintner}
N. Wiener and A. Wintner, “Fourier-Stieltjes Transforms and Singular Infinite Convolutions," \textit{Amer. J. Math.}, vol. 60, no. 3, pp. 513--522, 1938. DOI: \url{https://doi.org/10.2307/2371591}

\bibitem{Zygmund2}
A. Zygmund, \textit{Trigonometric Series}, vol. 2, 2nd ed. Cambridge: Cambridge University Press, 1968.

\bibitem{Schaeffer}
A.C. Schaeffer, “The Fourier-Stieltjes coefficients of a function of bounded variation," \textit{Amer. J. Math.}, vol. 61, no. 4, pp. 934--940, 1939.

\bibitem{Salem}
R. Salem, “On some singular monotonic functions which are strictly increasing," \textit{Trans. Amer. Math. Soc.}, vol 53, no. 3, pp. 427–-439, 1943. DOI:
\href{https://doi.org/10.2307/1990210}{10.2307/199021}.

\bibitem{JordanAndSahlsten}
T. Jordan and T. Sahlsten, “Fourier tranforms of Gibbs measures for the Gauss map," \textit{Math. Ann.}, vol. 364, nos. 3--4, pp. 983--1023, 2016.

\bibitem{Golubeva}
E. P. Golubeva, “SALEM’S PROBLEM FOR THE INVERSE MINKOWSKI ?(t) FUNCTION," \textit{J. Math. Sci.}, vol. 207, no. 6, pp. 808--814, 2015. DOI:
\href{https://doi.org/10.1007/s10958-015-2404-7}{10.1007/s10958-015-2404-7}.

\bibitem{Gorbatyuk}
N. V. Gorbatyuk, “On the Fourier–Stieltjes Coefficients of the Function Inverse to the Minkowski Function," \textit{Math. Notes}, vol. 109, no. 1, pp. 152-–154, 2021. DOI: \href{https://doi.org/10.1134/S0001434621010181}{10.1134/S0001434621010181}.

\bibitem{Grafakos}
L. Grafakos, \textit{Classical and Modern Fourier Analysis}, New Jersey: Pearson Education Inc., 2004.

\bibitem{Jaffard}
S. Jaffard and Y. Meyer, “Wavelet Methods for Pointwise Regularity and Local Oscillations of Functions," \textit{Mem. Amer. Math. Soc.}, vol. 587, 1996.

\bibitem{Mallat}
S. Mallat, \textit{A Wavelet Tour of Signal Processing - The Sparse Way}, 3rd ed. San Diego: Academic Press, 2009.

\bibitem{Stein}
E.S. Stein, \textit{Harmonic Analysis: real-variable methods, orthogonality and oscillatory integrals}, 2nd ed. Princeton: Princeton University Press, 1995.  



\bibitem{Zhang-Tang}
J. Zhang and W. Tang, “A chaotic system with H\"older continuity," \textit{Nonlinear Dyn.}, vol. 62, pp. 761--768, 2010. DOI:
\href{https://doi.org/10.1007/s11071-010-9760-0}{10.1007/s11071-010-9760-0}.

\bibitem{MathStackEx} \textit{xsin(1/x) Holder on [0,1]}, Mathematics Stack Exchange, November 2014, [Online]. Available: \url{https://math.stackexchange.com/questions/1023463/xsin1-x-holder-on-0-1}

\bibitem{Juhani_error}
J. Nissil\"a, “Error analysis of numerical Weyl fractional derivatives in the case of certain H\"older continuous functions," in \textit{Proc. 14th IEEE/ASME Int. Conf. on Mechatronic and Embedded Systems and Applications, MESA 2018}, Oulu, Finland, July 2--4, 2018. DOI: \href{https://doi.org/10.1109/MESA.2018.8449193}{10.1109/MESA.2018.8449193}.

\bibitem{Weyl}
H. Weyl, “Bemerkungen zum Begriff des Differentialquotienten gebrochener Ordnung," \textit{Vierteljschr. Naturforsch. Ges. Z\"urich.}, vol. 62, no. 1--2, pp. 296--302, 1917.

\bibitem{Juhani_der}
J. Nissil\"a, S. Lahdelma S. and J. Laurila, “Condition monitoring of the front axle of a load haul dumper with real order derivatives and generalised norms," in \textit{Proc. 11th Int. Conf. on Condition Monitoring and Machinery Failure Prevention Technologies, CM 2014/MFPT 2014}, Manchester, UK, June 10--12, 2014, pp. 407--426. Available:
\url{http://urn.fi/urn:nbn:fi-fe2018091835968}

\bibitem{FourDiff1}
E. Tadmor, “The exponential accuracy of Fourier and Tchebyshev differencing methods," \textit{SIAM J. Numer. Anal.} vol. 23, no. 1, pp. 1--10, 1986. Available:
\url{https://ntrs.nasa.gov/archive/nasa/casi.ntrs.nasa.gov/19840025037.pdf}, DOI: \href{https://doi.org/10.1137/0723001}{10.1137/0723001}.

\bibitem{FourDiff3}
L. Condat and T. Möller, “Quantitative Error Analysis for the Reconstruction of Derivatives," \textit{IEEE Trans. Signal Process.}, vol. 59, no. 6, pp. 2965--2969, 2011. DOI: \href{https://doi.org/10.1109/TSP.2011.2119316}{10.1109/TSP.2011.2119316}.

\bibitem{FourDiff2}
Z. Qian, C.-L. Fu, X.-T. Xiong and T. Wei, “Fourier truncation method for high order numerical derivatives," \textit{Appl. Math. Comput.} vol. 181, pp. 940--948, 2006. DOI: \href{https://doi.org/10.1016/j.amc.2006.01.057}{10.1016/j.amc.2006.01.057}.

\bibitem{Mizuguchi2017}
M. Mizuguchi, K. Tanaka, K. Sekine and S. Oishi, “Estimation of Sobolev embedding constant on a domain dividable into bounded convex domains," \textit{J. Inequal. Appl.} vol. 299, 2017. DOI: \href{https://doi.org/10.1186/s13660-017-1571-0}{10.1186/s13660-017-1571-0}.

\bibitem{Knockaert}
L. Knockaert and D.D. Zutter, “An Exponential Truncation Bound for Analytic Fourier Series," \textit{IEEE Trans. Signal Process.}, vol. 46, no. 3, pp. 788--790, 1998. DOI: \href{https://doi.org/10.1109/78.661348}{10.1109/78.661348}.

\bibitem{Zemanian}
A.H. Zemanian, \textit{Distribution Theory and Transform Analysis: An Introduction to Generalized Functions, with Applications}, NY: Dover, 1987.

\bibitem{GelfandShilov2}
I.M. Gel'fand and G.E. Shilov, \textit{Generalized Functions, Spaces of Fundamental and Generalized Functions}, vol. 2 (Friedman M.D., Feinstein A., Peltzer C.P., trans.), New York, London: Academic Press, 1964.

\bibitem{GelfandShilov}
I.M. Gel'fand and G.E. Shilov, \textit{Generalized Functions, Properties and Operations}, vol. 1 (Saletan E., trans.), New York, London: Academic Press, 1964.

\bibitem{MathStackEx2} \textit{Decay of a Convolution}, Mathematics Stack Exchange, May 2014, [Online]. Available: \url{https://math.stackexchange.com/questions/812798/decay-of-a-convolution}

\bibitem{MathStackEx3} \textit{Lower Bound on Decay of Convolution}, Mathematics Stack Exchange, September 2017, [Online]. Available: \url{https://math.stackexchange.com/questions/2443582/lower-bound-on-decay-of-convolution}

\bibitem{MathStackEx4} \textit{Fourier transform of $|x|^s$ and log$|x|$}, Mathematics Stack Exchange, April 2012, [Online]. Available: 
\url{https://math.stackexchange.com/questions/137994/fourier-transform-of-xs-and-logx}


\end{thebibliography}
\end{document}